\documentclass[12pt]{article}
\usepackage{amsfonts,amssymb,latexsym, amsmath, amsthm}

\usepackage[usenames]{color}


\newtheorem{thm}{Theorem}[section]

\theoremstyle{remark}

\theoremstyle{plain}

\numberwithin{equation}{section}

\def\leq{\leqslant}
\def\geq{\geqslant}
\def\Aut{{\rm Aut}}

\def\RR{{\mathbb R}}

\newcommand{\Om}{\Omega}

\def\Z{{\mathbb Z}}

\def\QQ{{\mathbb Q}}

\def\mC{{\mathcal C}}

\def\fK{{\mathfrak K}}

\def\C{{\mathrm C}}
\def\dep{\mathrm {dep}}

\newtheorem{defi}{Definition}[section]
\newtheorem{theorem}{\bf Theorem}


\newtheorem{lemma}[defi]{Lemma}
\newtheorem{prop}[defi]{Proposition}

\newtheorem{cor}[defi]{Corollary}

\newtheorem{remark}[defi]{Remark}

\newtheorem{*example}[defi]{*Example}

\addtocontents{ences.txt}{References}

\def\suppo{\mathrm{supp}}
\def\rst{\mathrm{rst}}
\def\St{\mathrm{st}}

\def\mro{\mathrm{o}}

\begin{document}

\title{Recognizing the real line} 
\author{A. M. W. Glass and John S. Wilson} \small{\date{\today}} \maketitle { \def\thefootnote{} \footnote{2010 AMS Classification: 20B07, 06F15, 03C60, 05C05.

Keywords: Transitive group, o-primitive, convex congruence, o-block, covering convex congruence. }

\setcounter{theorem}{0}
\setcounter{footnote}{0}

\begin{abstract}

Let $(\Omega, \leq)$ be a totally ordered set.  We prove that if $\Aut(\Omega,\leq)$ is transitive and satisfies the same first-order sentences as $\Aut(\RR,\leq)$ (in the language of groups) then $\Omega$ and $\RR$ are isomorphic ordered sets. This improvement of a theorem of Gurevich and Holland is obtained as a consequence of a study of centralizers associated with certain transitive subgroups of $\Aut(\Omega,\leq)$. 
\end{abstract}

\section{Introduction}  

In 1981, Gurevich and Holland \cite{GH} proved the following result.

\begin{thm} \label{GH}  Suppose that $(\Omega,\leq)$ is a totally ordered set such that $\Aut(\Omega,\leq)$ acts transitively on pairs $(\alpha,\beta)$ with $\alpha< \beta$. 
If $\Aut(\Omega,\leq)$ and $\Aut(\RR,\leq)$ satisfy the same first-order sentences,  then $\Omega$ is isomorphic to $\RR$ as an ordered set. \end{thm}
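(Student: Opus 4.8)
\noindent The plan is to reduce the theorem to Cantor's order-theoretic characterisation of the real line: a totally ordered set is order-isomorphic to $\RR$ if and only if it is densely ordered, has no endpoints, is Dedekind complete, and possesses a countable order-dense subset (is separable). Writing $G=\Aut(\Omega,\leq)$, the task is therefore to exhibit first-order sentences in the language of groups that are true in $\Aut(\RR,\leq)$ and, when transferred to $G$ by elementary equivalence, force each of these four properties on $\Omega$.

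The first two properties are cheap. The sentence $\exists x\,\exists y\,(xy\neq yx)$ holds in $\Aut(\RR,\leq)$, so $G$ is non-abelian and in particular $\Omega$ is infinite. Transitivity on increasing pairs then forbids endpoints, and since an order-automorphism carries covering pairs to covering pairs whereas an infinite chain with a transitive action on increasing pairs cannot consist solely of jumps, $\Omega$ is densely ordered with no first or last element. Thus everything rests on capturing Dedekind completeness and separability group-theoretically, uniformly across the class of dense chains with $\Aut$ transitive on increasing pairs.

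The key to both is to reconstruct the action of $G$ on $\Omega$ from the abstract group. Two elements commute when their supports are disjoint, i.e. $\suppo(g)\cap\suppo(h)=\emptyset$; using this one can single out the ``one-bump'' elements (those supported on a single convex piece) and make the inclusion order on their supports first-order definable, thereby recovering the convex skeleton on which $\Omega$ is built, together with the point-stabilisers as maximal subgroups of an appropriate definable type. Dedekind completeness I would then phrase as a sentence $\sigma_{\mathrm{cpl}}$ asserting that every bounded monotone configuration of nested bumps has its join realised by a genuine point-stabiliser, so that no Dedekind cut escapes $\Omega$.

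Separability is the delicate part, and I expect it to be the main obstacle: a countable dense subset is not a first-order object, and the naive formulation cannot even distinguish $\RR$ from a homogeneous Suslin line. Rather than assert existence of such a set directly, I would manufacture a canonical countable ``scaffold'' inside the group. Fix a fixed-point-free element $t$ in the role of translation by $1$; its roots, available because $\Aut(\RR,\leq)$ is divisible (a first-order condition that transfers to $G$), provide a rational scaffold, while the centraliser $\C_G(t)$, which for $\RR$ is the automorphism group of the quotient circle $\RR/\langle t\rangle$, furnishes a self-similar copy of the same situation one scale down and drives an inductive control of the local structure. The sentence $\sigma_{\mathrm{sep}}$ then asserts that some such $t$ exists whose scaffold is order-dense; since a countable scaffold can be dense only in a separable chain, this holds in $\Aut(\RR,\leq)$ but must fail for every non-separable chain of the class. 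The real content of the argument is precisely this encoding—turning the inherently second-order notions of completeness and, above all, separability into faithful finitary configurations of commutators, centralisers and roots. Once $\sigma_{\mathrm{cpl}}$ and $\sigma_{\mathrm{sep}}$ are established, elementary equivalence with $\Aut(\RR,\leq)$ transfers them to $G$, and Cantor's theorem yields $\Omega\cong\RR$.
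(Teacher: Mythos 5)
You should first note that the paper does not actually prove Theorem \ref{GH}: it is quoted from Gurevich and Holland \cite{GH}, where it is established in the richer language of $\ell$-groups (the group-language version is Corollary \ref{GHgp}). Your overall roadmap --- reduce to Cantor's characterisation of $\RR$, observe that density and absence of endpoints are cheap from o-$2$-transitivity plus non-abelianness, and then try to express Dedekind completeness and separability by sentences about the group, with the homogeneous Suslin line correctly identified as the critical obstruction --- is in fact the strategy of \cite{GH}. So the plan is sound and historically accurate.

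The gap is that the two sentences carrying all the weight, $\sigma_{\mathrm{cpl}}$ and $\sigma_{\mathrm{sep}}$, are asserted rather than constructed, and in the form you describe them they are not first-order. ``Every bounded monotone configuration of nested bumps has its join realised'' quantifies over infinite families of group elements; the genuine encoding (as in \cite{GH} and related work) codes a single cut by a single bounded one-bump element $g$ and asserts, via stabilizer/centralizer conditions, that $\sup\suppo(g)$ is realised in $\Omega$ --- and even that requires proof that every cut arises this way. The separability step is worse: the ``scaffold'' is the orbit of a point under $t$ and its iterated roots, i.e.\ a subset of $\Omega$ coded by a countable, finitely generated-style subgroup, and membership in the subgroup generated by given elements is not first-order definable, so ``some such $t$ exists whose scaffold is order-dense'' is not a sentence in the language of groups as it stands. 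Moreover roots in $\Aut(\RR,\leq)$ are wildly non-unique, so there is no canonical scaffold to refer to, and even granting a configuration $t^s=t^2$ with $t$ fixed-point-free, density of the resulting dyadic-type orbit is not automatic in a general Dedekind-complete o-$2$-transitive chain: the intervals $[\alpha,\alpha t^{s^{-n}}]$ need not shrink to a point, and forcing them to do so is yet another expressibility problem, precisely the one that must fail in $\Aut(S,\leq)$ for a homogeneous Suslin line $S$. Since you yourself write that ``the real content of the argument is precisely this encoding,'' what you have is a correct outline whose decisive steps --- writing down the sentences, verifying them in $\Aut(\RR,\leq)$, and refuting them for every other chain in the class --- are missing. (A minor slip besides: for $t$ a translation, $\C_G(t)$ is the group of automorphisms commuting with $t$, a central extension by $\langle t\rangle$ of the orientation-preserving homeomorphism group of the quotient circle, not the automorphism group of that circle.)
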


Other results of a similar kind were obtained in \cite{GGHJ}. Theorem \ref{GH} required first-order sentences in the language of $\ell$-groups, which is richer than the language of groups, but a slight extension shows that in fact only sentences in the language of groups are needed (see \cite[Theorem 2B*]{G81}):

\begin{cor} \label{GHgp}  Suppose that $(\Omega,\leq)$ is a totally ordered set such that $\Aut(\Omega,\leq)$ acts transitively on pairs $(\alpha,\beta)$ with $\alpha< \beta$. 
If $\Aut(\Omega,\leq)$ and $\Aut(\RR,\leq)$ satisfy the same first-order sentences in the language of groups, then $\Omega$ is isomorphic to $\RR$ as an ordered set. \end{cor}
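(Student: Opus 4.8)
The plan is to deduce the corollary from Theorem~\ref{GH} by upgrading elementary equivalence in the language of groups to elementary equivalence in the language of $\ell$-groups. Concretely, I would look for a single first-order formula $\lambda(x,y)$ in the language of groups with the property that, in every transitive $\Aut(\Lambda,\leq)$ of the relevant kind (and in particular in both $\Aut(\Omega,\leq)$ and $\Aut(\RR,\leq)$), the relation it defines is a lattice order compatible with the group operation, i.e.\ it recovers the intrinsic order $f\leq g \iff \alpha f\leq\alpha g$ for all $\alpha$. Given such a $\lambda$, each $\ell$-group sentence $\sigma$ translates into a group sentence $\sigma^{\lambda}$ by replacing every occurrence of the order relation by $\lambda$; then $\Aut(\Omega,\leq)\models_{\ell}\sigma \iff \Aut(\Omega,\leq)\models_{\mathrm{grp}}\sigma^{\lambda}$, and similarly for $\RR$. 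Since the two groups satisfy the same group sentences, they satisfy the same $\sigma^{\lambda}$, hence the same $\ell$-group sentences, and Theorem~\ref{GH} then delivers $\Omega\cong\RR$.

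The substance therefore lies in the uniform group-theoretic definability of the order, and this is where transitivity on ordered pairs is indispensable. First I would try to express support-disjointness group-theoretically: two elements with disjoint supports commute, and the abundance of conjugates available under transitivity on pairs should let one detect, purely group-theoretically, whether two commuting elements in fact have overlapping supports. From a formula for disjointness of $\suppo(f)$ and $\suppo(g)$, together with conjugacy, one can then build up a formula recognising comparability and, ultimately, the whole partial order of the $\ell$-group. This is exactly the content of \cite[Theorem~2B*]{G81}, which I would invoke rather than re-derive the formula.

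The main obstacle is a genuine asymmetry: positivity cannot be defined outright. The order-reversing bijection $x\mapsto -x$ of $\RR$ is \emph{not} an automorphism of $(\RR,\leq)$, yet conjugation by it is a group automorphism of $\Aut(\RR,\leq)$ interchanging the positive and negative cones. Consequently no group formula can be invariant enough to pick out $\{\,g : g\geq e\,\}$ absolutely; the best one can hope for is to define the order up to this order-reversing duality. I would accommodate this by accepting that $\lambda$ defines, in each model, either the intrinsic order or its reverse. This is harmless: if $\lambda$ defines the reverse order on $\Aut(\Omega,\leq)$, then the resulting $\ell$-group is $\Aut(\Omega^{*},\leq)$, where $\Omega^{*}$ is $\Omega$ with the reversed order. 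The translation argument shows this $\ell$-group is $\ell$-elementarily equivalent to $\Aut(\RR,\leq)$, so Theorem~\ref{GH} gives $\Omega^{*}\cong\RR$; since $\RR\cong\RR^{*}$ as ordered sets, we again conclude $\Omega\cong\RR$.

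In summary, the only real work is the definability step of \cite[Theorem~2B*]{G81}; the reduction around it—translating $\ell$-group sentences into group sentences and absorbing the order-reversing ambiguity—is routine, and Theorem~\ref{GH} supplies the conclusion.
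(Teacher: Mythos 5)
Your proposal is correct and is essentially the paper's own route: the paper establishes this corollary simply by citing \cite[Theorem 2B*]{G81}, and the proof behind that citation is exactly the reduction you describe --- a uniform group-language interpretation of the lattice order, valid up to order-reversal, in doubly homogeneous automorphism groups (as in \cite{GGHJ}), followed by translation of $\ell$-group sentences into group sentences and an appeal to Theorem \ref{GH}, with the reversal ambiguity absorbed because $\RR$ is isomorphic to its reverse. Your handling of the order-reversing duality is the right (and necessary) refinement, so nothing further is needed.
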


Here we establish the following improvement to Theorem \ref{GH}. 

\begin{theorem} \label{reals} Suppose that $(\Omega,\leq)$ is a totally ordered set on which $\Aut(\Omega,\leq)$ acts transitively, 
and that $\Aut(\Omega,\leq)$ and $\Aut(\RR,\leq)$ satisfy the same first-order  sentences in the language of groups.  Then $\Omega$ is isomorphic to $\RR$ as an ordered set.
\end{theorem}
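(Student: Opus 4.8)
The plan is to show that the hypotheses force $G := \Aut(\Omega,\leq)$ to act o-$2$-transitively on $\Omega$, after which Theorem~\ref{GH} applies verbatim and delivers $\Omega \cong \RR$. The whole difficulty therefore lies in upgrading transitivity to o-$2$-transitivity using only first-order group-theoretic information. Transitivity alone does not exclude lexicographic phenomena — for instance $\Aut(\RR,\leq)\Wr\Aut(\RR,\leq)$ acts transitively on $\RR\times\RR$ ordered lexicographically — so one must genuinely detect, and then destroy, the tower of convex congruences of $(G,\Omega)$. Because we are confined to the language of groups and may not invoke the lattice order on $G$, the essential translation is that disjointness of supports is witnessed by commuting; consequently o-blocks, and the primitive components cut out by covering convex congruences, can only be probed through centralizers. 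This is precisely why centralizers are the central tool.

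The crux — and the step I expect to be the main obstacle — is to prove that the tower of convex congruences is trivial, i.e.\ that $(G,\Omega)$ is o-primitive. Here I would study transitive subgroups $H\leq G$ together with their centralizers $C_G(H)$. A proper convex congruence supplies, on the one hand, a subgroup that is transitive on a single block but whose centralizer in $G$ is non-trivial, since elements supported off that block commute with everything happening inside it; on the other hand, $\Aut(\RR,\leq)$ admits transitive configurations with trivial centralizer. The task is to compress these centralizer phenomena into a single first-order sentence $\sigma$, true in $\Aut(\RR,\leq)$ and false in every transitive $(G,\Omega)$ possessing a proper convex congruence. The delicate points are that ``transitive subgroup'' is not itself first-order, so $H$ must be replaced by a boundedly generated or otherwise definable surrogate whose orbits and supports are controlled purely group-theoretically, and that the heuristic ``commuting implies disjoint support'' must be made exact within the language of groups alone. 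Granting such a $\sigma$, elementary equivalence $G\equiv\Aut(\RR,\leq)$ transfers it to $G$, whence $(G,\Omega)$ is o-primitive.

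Once o-primitivity is in hand I would apply the classification of transitive o-primitive groups: such a $(G,\Omega)$ is either o-$2$-transitive, or regular (so $\Omega$ is order-isomorphic to a subgroup of $(\RR,+)$ on which $G$ acts by translations, forcing $G$ abelian), or periodic (so $G$ carries a non-trivial central period $z$). Two of these are eliminated outright by sentences valid in $\Aut(\RR,\leq)$: the sentence $\exists x\,\exists y\,(xy\neq yx)$ shows $G$ is non-abelian and rules out the regular case, while the sentence asserting $Z(G)=1$ — true because $\Aut(\RR,\leq)$ is centreless — rules out the periodic case, in which the period $z$ lies in $Z(G)$. The only surviving possibility is that $G$ acts o-$2$-transitively on $\Omega$.

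With o-$2$-transitivity established and $G\equiv\Aut(\RR,\leq)$ still in force, Theorem~\ref{GH} applies directly and yields $\Omega\cong\RR$ as ordered sets, completing the proof. I emphasise that the real work, and the expected obstacle, is entirely in the second paragraph: constructing a correct and genuinely first-order centralizer criterion for the absence of convex congruences, since Theorem~\ref{GH} already absorbs the completeness and separability content that singles out $\RR$ among o-$2$-transitive chains.
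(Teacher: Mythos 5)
Your overall strategy coincides with the paper's: encode o-primitivity of $(\Aut(\Omega,\leq),\Omega)$ in a first-order sentence of group theory built from centralizers, use McCleary's classification to upgrade o-primitive plus non-abelian to o-$2$-transitivity, and finish with the group-language Gurevich--Holland result (here you should cite Corollary \ref{GHgp} rather than Theorem \ref{GH}, since your hypothesis is elementary equivalence in the language of groups, not of $\ell$-groups). But there is a genuine gap exactly where you flag it: the sentence $\sigma$ is never constructed, and everything after ``Granting such a $\sigma$'' is conditional, so what you have is a programme rather than a proof. That construction is the entire technical content of the paper. Concretely, the paper replaces your non-definable ``transitive subgroup $H$'' by the uniformly definable sets $X_h=\{[h^{-1},h^g]\mid g\in G\}$ and $W_h=\bigcup\{X_{h^g}\mid g\in G,\;[X_h,X_{h^g}]\neq 1\}$, and proves (Propositions \ref{2centd} and \ref{2centm}) that for suitable $h$ (those in $Q_\Delta$) the double centralizer satisfies $\C^2_G(W_h)=\rst(\Delta)$; o-primitivity is then equivalent to the first-order condition that $\C^2_G(W_g)=G$ for all $g\neq 1$ (Proposition \ref{propoprim}). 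Making your heuristic ``commuting exactly reflects disjointness of supports'' precise is delicate: it requires the commutator lemma for o-$2$-transitive actions (Lemma \ref{primd}), a case division according to whether the spine $\fK$ has a minimal element (Sections 4 and 5), and a separate analysis of the locally abelian case, where $W_g=\emptyset$ and the criterion still discriminates because there $\C^2_G(W_g)=\C_G(G)\neq G$, while $\C_G(G)=G$ in the o-primitive abelian case. None of this is routine, and without it the central claim of your second paragraph is unproved.

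A secondary inaccuracy occurs in your third paragraph: to eliminate the periodic branch of McCleary's trichotomy you assert that the period $z$ lies in $Z(G)$, but in general the period is an automorphism of the Dedekind completion that centralizes $G$ and need not be an element of $G$, so the sentence $Z(G)=1$ does not obviously yield the desired contradiction. The paper sidesteps this entirely: $\Aut(\Omega,\leq)$ is fully depressible, and for transitive fully depressible $\ell$-permutation groups the trichotomy collapses to the dichotomy of Proposition \ref{dichot} (regular representation of a subgroup of $(\RR,+)$, or o-$2$-transitive) --- in a periodically o-primitive group no non-trivial element has support inside a single period interval, which is incompatible with full depressibility --- after which the single sentence $\exists x\,\exists y\,(xy\neq yx)$, true in $\Aut(\RR,\leq)$, rules out the abelian case, just as you argue.
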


Transitivity is necessary in the above result.  Let $\Lambda$ be any rigid totally ordered set with at least two elements (for example a finite totally ordered set with at least two elements), and let $\Omega = \Lambda \times \RR$, with the order defined by $(\lambda_1,r_1)< 
(\lambda_2,r_2)$ if $r_1<r_2$ or if both $r_1=r_2$ and $\lambda_1<\lambda_2$.
It is easy to see that $\Aut(\Lambda \times \RR, \leq)$ is isomorphic to $\Aut(\RR,\leq)$.

Similar arguments allow us to strengthen other known results.   

\begin{theorem} \label{GHQ2}  Suppose that $(\Omega,\leq)$ is a totally ordered set on which $\Aut(\Omega,\leq)$ acts transitively. If $\Aut(\Omega,\leq)$ and $\Aut(\QQ,\leq)$ satisfy the same first-order sentences in the language of  groups then $\Omega$ is isomorphic as an ordered set to $\QQ$ or $\RR \setminus \QQ$. \end{theorem}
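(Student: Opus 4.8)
The plan is to leverage Theorem~\ref{reals} by characterizing, in purely group-theoretic first-order terms, which of the two possibilities $\QQ$ or $\RR\setminus\QQ$ a given $(\Omega,\leq)$ realizes, and to reduce the whole problem to recognizing when $\Omega$ is order-complete versus not. First I would note that both $\QQ$ and $\RR\setminus\QQ$ are countable, dense, without endpoints, and order-homogeneous, so by the classical back-and-forth argument they share many order-theoretic features; the essential distinction between them is that $\RR\setminus\QQ$ is \emph{Dedekind complete relative to its own gaps} in a way $\QQ$ is not — more precisely, $\QQ$ and $\RR\setminus\QQ$ are distinguished by how their automorphism groups interact with convex congruences and centralizers, exactly the machinery the abstract says is developed for Theorem~\ref{reals}. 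The target is therefore to show: if $\Aut(\Omega,\leq)\equiv\Aut(\QQ,\leq)$ in the language of groups, then $\Omega$ is countable-like (in the relevant first-order sense) and densely ordered without endpoints, and then to split according to a first-order sentence that detects the completeness-type behaviour.

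The key steps, in order, would be as follows. First I would establish that transitivity of $\Aut(\Omega,\leq)$ together with elementary equivalence to $\Aut(\QQ,\leq)$ forces the order on $\Omega$ to be dense without endpoints: one uses first-order sentences expressing the existence of elements of the group with prescribed support behaviour (bumps), mirroring the standard proof that $\Aut(\QQ,\leq)$ is characterized among transitive $\ell$-permutation groups. Second, I would invoke the transitivity hypothesis to rule out the ``lexicographic'' obstructions of the kind exhibited after Theorem~\ref{reals} (the $\Lambda\times\RR$ example), so that the congruence lattice of the action is trivial and the group is o-primitive. Third — and this is the crux — I would produce a single first-order sentence $\sigma$ in the language of groups that holds in $\Aut(\RR,\leq)$ but fails in $\Aut(\QQ,\leq)$, built from the centralizer analysis: the point is that in the complete case one can find group elements witnessing suprema of bounded families of bumps, whereas over $\QQ$ such suprema may fall into gaps and the corresponding group element fails to exist. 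Since $\Aut(\Omega,\leq)\equiv\Aut(\QQ,\leq)$, the sentence $\sigma$ fails in $\Aut(\Omega,\leq)$ as well, which should force $\Omega$ to be order-incomplete of the same flavour as $\QQ$ or $\RR\setminus\QQ$, i.e.\ to have gaps densely.

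Finally I would assemble the dichotomy. Having shown $\Omega$ is dense, without endpoints, o-primitive, and incomplete, I would argue that the two minimal models of this theory are exactly $\QQ$ and $\RR\setminus\QQ$: both are countable dense orders with gaps, but they are distinguished by whether the gaps are ``Dedekind'' (a point of $\RR$ missing, as in $\RR\setminus\QQ$) or whether the completion adds a genuinely new cofinal/coinitial structure. Concretely, I would use the result already available — Theorem~\ref{reals} applied to the Dedekind completion $\overline\Omega$ of $\Omega$ — to show that $\overline\Omega\cong\RR$, and then classify $\Omega$ as a dense subset of $\RR$ whose complement is also dense, which by the back-and-forth theorem for countable dense orders makes $\Omega$ order-isomorphic either to $\QQ$ (when $\Omega$ contains a relatively complete countable dense suborder) or to $\RR\setminus\QQ$ (when its complement does). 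The main obstacle I anticipate is Step three: crafting a genuinely first-order group sentence that separates completeness behaviour, since completeness is prima facie a second-order property, so the real work lies in encoding ``every bounded increasing sequence of bumps has a least upper bump'' through centralizers and commutation relations alone, exactly as the centralizer study underpinning Theorem~\ref{reals} is designed to do, and in verifying that this encoding transfers correctly between $\Aut(\Omega,\leq)$ and $\Aut(\QQ,\leq)$ under mere elementary equivalence rather than isomorphism.
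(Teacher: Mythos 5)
Your outline misses the paper's central idea and substitutes steps that either fail or are red herrings. The actual difficulty in Theorem~\ref{GHQ2} is upgrading mere transitivity to o-$2$-transitivity, and the paper does this as follows: $\Aut(\QQ,\leq)$ is o-$2$-transitive, hence o-primitive and non-abelian; by Proposition~\ref{propoprim}, for a transitive fully depressible $\ell$-permutation group, o-primitivity is equivalent to the condition $\C^2_G(W_g)=G$ for all $g\neq 1$, which is a \emph{single first-order sentence} in the language of groups because the sets $X_h$ and $W_h$ are uniformly definable; so o-primitivity (and non-abelianness) transfer to $\Aut(\Omega,\leq)$ by elementary equivalence; McCleary's trichotomy (Proposition~\ref{dichot}) then forces case (II), i.e.\ o-$2$-transitivity; and finally the known group-language version of Gurevich--Holland for $\QQ$ (\cite[Theorem 2C*]{G81}) yields $\Omega\cong\QQ$ or $\RR\setminus\QQ$. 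Your Step two asserts that transitivity alone ``rules out the lexicographic obstructions'' and gives o-primitivity; this is false. For instance, $\Aut(\Lambda,\leq)$ with $\Lambda=\RR\times\RR$ ordered lexicographically acts transitively but is very far from o-primitive. The only route from transitivity to o-primitivity here is through elementary equivalence with $\Aut(\QQ,\leq)$, and making o-primitivity first-order expressible is precisely the double-centralizer machinery of Sections 4 and 5 --- the one ingredient your proposal never supplies.

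The endgame of your proposal also breaks down at several points. First, $\RR\setminus\QQ$ is uncountable, so your appeal to Cantor's back-and-forth theorem for countable dense orders cannot classify $\Omega$ (for which, moreover, no cardinality bound is available a priori); the genuine order-theoretic content --- density, countable coterminality, and the $\QQ$-versus-$\RR\setminus\QQ$ dichotomy --- lives inside the cited result \cite[Theorem 2C*]{G81}, which you would still need in full. Second, applying Theorem~\ref{reals} to the Dedekind completion $\overline\Omega$ is unjustified: its hypothesis concerns the full group $\Aut(\overline\Omega,\leq)$, about which the assumption $\Aut(\Omega,\leq)\equiv\Aut(\QQ,\leq)$ says nothing (note that when $\Omega\cong\QQ$ one has $\Aut(\Omega,\leq)\not\equiv\Aut(\RR,\leq)$ even though $\overline\Omega\cong\RR$, the completion acquiring new automorphisms and a different theory). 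Third, your Step three is a detour: a sentence $\sigma$ separating $\Aut(\RR,\leq)$ from $\Aut(\QQ,\leq)$ exists trivially (otherwise Theorem~\ref{reals} would force $\QQ\cong\RR$), but it carries no usable structural information here, and nothing in the theorem requires distinguishing completeness behaviour --- the conclusion is a disjunction, and indeed $\Aut(\QQ,\leq)\cong\Aut(\RR\setminus\QQ,\leq)$, so no group sentence could separate the two outcomes anyway.
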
 

The corresponding result with the stronger hypothesis that
$\Aut(\Omega,\leq)$ acts o-$2$ transitively on $\Om$, i.e., transitively on pairs $(\alpha,\beta)$ with $\alpha< \beta$, is a slight extension of a result of
Gurevich and Holland \cite{GH} (cf.\ \cite[Theorem 2C*]{G81}). 

The breakthrough in this work comes from employing a technique in \cite{W}: we use double centralizers of certain subsets of groups of order-preserving automorphisms of totally ordered sets $\Omega$ to give first-order expressibility of certain convex subsets of $\Omega$.  The ideas have implications for a large family of subgroups of the groups $\Aut(\Omega,\leq)$ (see \cite{arx}).

\section{Preliminaries and a reduction}

We write $X\subset Y$ for $X\subseteq Y$ and $X\neq Y$.  Our notation for conjugates and commutators is in accordance with our use of right actions: we write $g^f$ for $f^{-1}gf$ and $[f,g]$ for $f^{-1}g^{-1}fg$.

Automorphism groups $\Aut(\Omega,\leq)$ of totally ordered sets $(\Omega,\leq)$ are closed under taking the pointwise maximum $f\vee g$ and pointwise minimum $f\wedge g$
 of elements $f,g$ defined, respectively, by $$\alpha(f\vee g)= \max\{\alpha f,\alpha g\}\quad\hbox{and}\quad 
\alpha(f\wedge g)= \min\{\alpha f,\alpha g\}\quad \hbox{for all }\alpha\in \Omega.$$
An $\ell$-{\em permutation group} $(G,\Omega)$ is a subgroup of $\Aut(\Omega,\leq)$ closed under the binary operations $\vee$ and $\wedge$. 
Transitive $\ell$-permutation groups are of particular interest, and all groups studied in this paper will be assumed to be transitive.

Let $(G,\Omega)$ be a transitive $\ell$-permutation group.  A $G$-congruence on the set $\Omega$ is an equivalence relation  ${\mathcal C}$ on $\Om$ such that $(\alpha g)\, {\mathcal C}\,(\beta g)$ whenever $\alpha\, {\mathcal C}\,\beta$ and $g\in G$ ($\alpha,\beta\in \Om$). A
{\em convex $G$-congruence} $\mathcal C$ on $\Omega$ is a $G$-congruence  with all  ${\mathcal C}$-classes convex; these classes are called {\em o-blocks}.  
We suppress the mention of $G$ if it is clear from context.
By transitivity, each o-block $\Delta$ is a class of a unique convex
congruence; its set of classes is $\{ \Delta g\mid g\in G\}$.
We denote this convex congruence by $\kappa(\Delta)$.
  
\begin{prop} \label{ccc} {\rm (\cite[Theorem 3.A]{G81}) \em
The set of convex congruences of a transitive $\ell$-permutation group is totally ordered by inclusion. }
\end{prop}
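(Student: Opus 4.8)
The plan is to show that any two convex congruences $\mathcal{C}$ and $\mathcal{D}$ of a transitive $(G,\Omega)$ are comparable, i.e.\ $\mathcal{C}\subseteq\mathcal{D}$ or $\mathcal{D}\subseteq\mathcal{C}$ as subsets of $\Omega\times\Omega$. I would argue by contradiction, assuming neither inclusion holds, and extract incompatible order relations from the convexity of the classes. Note first that inclusion of congruences is equivalent to the classwise statement that each $\mathcal{C}$-class lies in a $\mathcal{D}$-class (or vice versa), so incomparability produces a genuine ``crossing'' of classes that I aim to rule out.

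The first main step is to concentrate both failures of inclusion at a single point. Since $\mathcal{C}\not\subseteq\mathcal{D}$ there are points with $\alpha_0\,\mathcal{C}\,\beta$ but not $\alpha_0\,\mathcal{D}\,\beta$, and since $\mathcal{D}\not\subseteq\mathcal{C}$ there are points with $\gamma\,\mathcal{D}\,\delta_0$ but not $\gamma\,\mathcal{C}\,\delta_0$; in each pair the two points are distinct (reflexivity would otherwise be violated), so after relabelling I may assume $\alpha_0<\beta$ and $\gamma<\delta_0$. Now I would use transitivity of $(G,\Omega)$ to choose $g\in G$ with $\gamma g=\alpha_0$, and set $\alpha=\alpha_0$, $\delta=\delta_0 g$. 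Because $g$ preserves order and $\mathcal{C},\mathcal{D}$ are $G$-congruences (so $\gamma g\,\mathcal{D}\,\delta_0 g$ and $\neg(\gamma g\,\mathcal{C}\,\delta_0 g)$), this yields $\alpha<\delta$ with $\alpha\,\mathcal{D}\,\delta$ and $\neg(\alpha\,\mathcal{C}\,\delta)$. Thus at the single base point $\alpha$ I have a point $\beta>\alpha$ in the $\mathcal{C}$-class of $\alpha$ but not its $\mathcal{D}$-class, and a point $\delta>\alpha$ in the $\mathcal{D}$-class of $\alpha$ but not its $\mathcal{C}$-class.

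The second main step exploits convexity twice. Write $C$ and $D$ for the $\mathcal{C}$-class and the $\mathcal{D}$-class of $\alpha$; both are convex and contain $\alpha$. Since $\beta\in C$, $\delta\notin C$, and $\alpha<\delta$, convexity of $C$ forces $\delta>\beta$: otherwise $\alpha<\delta\le\beta$ with $\alpha,\beta\in C$ would place $\delta$ in $C$. Symmetrically, since $\delta\in D$, $\beta\notin D$, and $\alpha<\beta$, convexity of $D$ forces $\beta>\delta$. The inequalities $\delta>\beta$ and $\beta>\delta$ are contradictory, so the assumption that $\mathcal{C}$ and $\mathcal{D}$ are incomparable is untenable, which is what we wanted.

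I expect the only delicate point to be the reduction to a common base point: one must verify that translating the second witness by $g$ simultaneously preserves the strict order, membership of the $\mathcal{D}$-class, and non-membership of the $\mathcal{C}$-class, and this is precisely where transitivity together with $G$-invariance of the congruences is used. After that the contradiction is immediate. It is worth remarking that this argument uses only transitivity of $G$ and the convexity of the congruence classes, and in particular does not invoke the lattice operations $\vee$ and $\wedge$.
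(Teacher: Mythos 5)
Your proof is correct. The paper itself gives no proof of this proposition --- it is quoted with a citation to \cite[Theorem 3.A]{G81} --- and your argument is essentially the standard one from that source: use transitivity to translate both failures of inclusion to a common base point $\alpha$ (checking, as you do, that $G$-invariance of the congruences and order-preservation of $g$ carry the witnesses along), then derive the contradictory inequalities $\delta>\beta$ and $\beta>\delta$ from convexity of the two classes through $\alpha$. Your closing remark is also accurate: the result needs only a transitive group of order-automorphisms, not closure under $\vee$ and $\wedge$, which is consistent with how the paper uses it.
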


If ${\mathcal C}$ and ${\mathcal D}$ are convex congruences with ${\mathcal C}\subset {\mathcal D}$ and there is no convex congruence strictly between ${\mathcal C}$ and ${\mathcal D}$, then we say that 
$\mathcal D$ {\em covers} $\mathcal C$ and
${\mathcal C}$ is {\em covered} by ${\mathcal D}$. 
Let $\alpha,\beta\in \Omega$ be distinct.  Then both the union $U(\alpha,\beta)$ of all convex congruences $\mC$ for which $\alpha$, $\beta$ lie in distinct o-blocks and the intersection $V(\alpha,\beta)$ of all convex congruences $\mC$ for which $\alpha$, $\beta$ lie in the same o-block are convex congruences. Clearly, $U(\alpha,\beta)$ is covered by $V(\alpha,\beta)$.  Let 
$$\fK=\{V(\alpha,\beta)\mid\alpha,\beta\in\Omega,\alpha\neq\beta\}.$$
Thus $\fK$ is totally ordered by inclusion. 
It is called the {\em spine} of $(G,\Omega)$.  
For all $\alpha,\beta\in \Omega$ we have $\beta=\alpha g$ for some $g\in G$  by transitivity. 
Therefore $\fK$ can also be described as follows:
$$\fK=\{V(\alpha,\alpha g)\mid\alpha\in\Omega,g\in G, \alpha g\neq \alpha\}. $$
Write $T$ for the set of o-blocks of elements of $\fK$. If $\Delta\in
T$, then $\kappa(\Delta)\in \fK$ and so $\kappa$ restricts to a surjective map from $T$ to $\fK$.
For each $\mC \in \fK$, write $\pi(\mC)$ for both the convex congruence covered by $\mC$ and its set of  o-blocks; 
the latter inherits a total order from $\Omega$.
If  $\Delta$ is a $\mC$-class, let $\pi(\Delta)$ be the set of all $\pi(\mC)$-classes contained in $\Delta$.

We define the {\em stabilizer} $\St(\Delta)$  and {\em rigid stabilizer} $\rst(\Delta)$ of  an o-block $\Delta$ 
as follows:
$$\St(\Delta):=\{g\in G \mid \Delta g=\Delta\}\quad\hbox{ and }\quad\rst(\Delta):=  \{ g\in G\mid \suppo(g)\subseteq \Delta\},$$
where $\suppo(g):=\{ \alpha\in \Omega\mid \alpha g\neq \alpha\}.$ 
So $\St(\Delta)$ and $\rst(\Delta)$ are convex sublattice subgroups of $G$ and $\rst(\Delta)\subseteq\St(\Delta)$.

Each $g\in \St(\Delta)$ induces an automorphism $g_\Delta$ of the ordered set $\pi(\Delta)$ given by $$\Gamma  g_\Delta=\Gamma g\qquad \hbox{for all } \ \Gamma\in \pi(\Delta).$$   Let 
$$ G(\Delta):=\{ g_\Delta\mid g\in \St(\Delta)\}.$$

 Note that $(G(\Delta),\pi(\Delta))$ is transitive and o-primitive.  Furthermore, if $K\in \fK$ and $\Delta, \Delta'$ are both $K$-classes, then $(G(\Delta),\pi(\Delta))$ and 
$(G(\Delta'),\pi(\Delta'))$ are isomorphic, an isomorphism being induced by conjugation by any $f\in G$ with $\Delta f=\Delta'$  since $(\Gamma f)(f^{-1}gf)=(\Gamma g)f$ for all $g\in \rst(\Delta),\;\Gamma\in \pi(\Delta)$.  
It is customary to write $(G_K,\Omega_K)$ for any of these $\ell$-permutation groups; they are independent of the o-block $\Delta$ of $K$ to within $\ell$-permutation isomorphism. 
 
For each $g \in G$ and each subset $\Lambda$ of $\Om$ that is a union of 
convex $g$-invariant subsets  of $\Om$, write
$\dep(g,\Lambda)$ for the element of $\Aut(\Omega,\leq)$ that
agrees with
$g$ on $\Lambda$ and with the identity elsewhere.
We say that $(G,\Om)$ is {\em fully depressible} if
$\dep(g,\Lambda)\in
G$ for all $g\in G$ and all such $\Lambda\subseteq \Om$. 
In particular, if $(G,\Om)$ is fully depressible, $\Delta\in T$ and
$g\in \St(\Delta)$, then $\dep(g,\Lambda)\in \rst(\Delta)\subseteq G$.
Moreover, the action of $\{ g_\Delta\mid g\in \rst(\Delta)\}$ on
$\pi(\Delta)$ is equal to $(G(\Delta),\pi(\Delta))$ in this case for
every $\Delta\in T$.  Clearly $\Aut(\Omega,\leq)$ itself is fully
depressible.
 
If $G$ is transitive on all $n$-tuples $(\alpha_1,\dots,\alpha_n)\in \Omega^n$ with $\alpha_1< \dots < \alpha_n$, we say that $(G,\Omega)$ is o-$n$ {\em transitive}. 
We shall need the following result (see \cite[Lemma 1.10.1]{G81}):

\begin{lemma} \label{2=n}
Every $\mro$-$2$ transitive $\ell$-permutation group $(G,\Om)$ is $\mro$-$n$ transitive for all integers $n\geq2$.
\end{lemma}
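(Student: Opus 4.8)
The plan is to argue by induction on $n$, the base case $n=2$ being exactly the hypothesis that $(G,\Om)$ is $\mro$-$2$ transitive. For the inductive step, suppose $(G,\Om)$ is $\mro$-$(n-1)$ transitive and let $\alpha_1<\dots<\alpha_n$ and $\beta_1<\dots<\beta_n$ be increasing $n$-tuples in $\Om$. By the inductive hypothesis there is $g\in G$ with $\alpha_ig=\beta_i$ for $1\le i\le n-1$; writing $\gamma=\alpha_ng$, which satisfies $\gamma>\beta_{n-1}$ since $g$ is order-preserving, it remains only to move $\gamma$ to $\beta_n$ while fixing $\beta_1,\dots,\beta_{n-1}$. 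Thus the whole statement reduces, with $a=\beta_{n-1}$, to the following key claim: for every $a\in\Om$ the pointwise stabilizer $\St^-(a):=\{h\in G\mid \xi h=\xi \text{ for all }\xi\le a\}$ acts transitively on the upper ray $\{\xi\in\Om\mid \xi>a\}$. Granting this, an $h\in\St^-(a)$ with $\gamma h=\beta_n$ fixes each $\beta_i$ with $i\le n-1$ (as $\beta_i\le a$), so $\alpha_i(gh)=\beta_i$ for all $i$, completing the induction.

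First I would record an easy preliminary: $\mro$-$2$ transitivity forces $(\Om,\leq)$ to be densely ordered with neither a greatest nor a least element. Otherwise a point stabilizer would be compelled to fix a successor, a predecessor, or an endpoint, contradicting $\mro$-$2$ transitivity. This provides the room needed below to choose auxiliary points and elements.

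The heart of the matter is to produce elements of $\St^-(a)$ at all, that is, elements of $G$ fixing the whole lower ray $\{\xi\le a\}$ pointwise yet acting nontrivially above $a$; this is a cutting problem, since a general $\ell$-permutation group need not be depressible. Here I would exploit the lattice operations. Given $\gamma,\delta>a$, say with $\gamma<\delta$, $\mro$-$2$ transitivity yields $h_0\in G$ with $ah_0=a$ and $\gamma h_0=\delta$; replacing $h_0$ by $h_0\vee e$ (where $e$ is the identity) I may assume $h\geq e$, so that $h$ fixes $a$, carries $\{\xi\le a\}$ and $\{\xi\geq a\}$ each into itself, and still satisfies $\gamma h=\delta$. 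The only defect is the lower support $L:=\suppo(h)\cap\{\xi<a\}$. To remove it I would conjugate by a suitable $s$ in the stabilizer $\St(a):=\{s\in G\mid as=a\}$ and form $h\wedge h^s$. Since $h,h^s\geq e$ one checks that $\suppo(h\wedge h^s)=\suppo(h)\cap\suppo(h^s)$, and $\suppo(h^s)=\suppo(h)s$ is the setwise image of $\suppo(h)$ under $s$; as $s$ preserves each ray, the lower support of $h\wedge h^s$ is $L\cap Ls$. Hence, if $s$ is chosen so that $L$ and $Ls$ are disjoint while the upper supports still overlap, then $h\wedge h^s$ is a nontrivial element of $G$ fixing $\{\xi\le a\}$ pointwise.

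Finally I would promote the existence of such seed elements to full transitivity of $\St^-(a)$ on the upper ray: conjugating seeds by members of $\St(a)$, which by $\mro$-$2$ transitivity acts transitively on each ray, and recombining with $\vee$ and $\wedge$, one manufactures for prescribed $\gamma,\delta>a$ an element of $\St^-(a)$ sending $\gamma$ to $\delta$; since $\St^-(a)$ is a group, the case $\gamma>\delta$ follows by passing to inverses. The main obstacle is precisely the construction of the previous paragraph: one must choose $s\in\St(a)$ so as to disjoin the lower supports $L$ and $Ls$ while retaining a nontrivial overlap of the upper supports, which is delicate when $L$ is unbounded below, and this is exactly where $\mro$-$2$ transitivity, through the transitivity of $\St(a)$ on each ray together with the density recorded above, does the essential work.
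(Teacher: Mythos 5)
Your inductive reduction is fine as far as it goes, but the whole proof rests on your ``key claim'' --- that in \emph{every} o-$2$ transitive $\ell$-permutation group the pointwise stabilizer $\St^-(a)$ of the lower ray $\{\xi\mid\xi\le a\}$ acts transitively on $\{\xi\mid\xi>a\}$ --- and this claim is both unestablished at the decisive step and, worse, false in general. The concrete failure is exactly where you say it is delicate: the element $h=h_0\vee 1$ may move \emph{every} point below $a$, i.e.\ $L=\suppo(h)\cap\{\xi<a\}$ can be the entire lower ray. Since every $s\in\St(a)$ permutes $\{\xi<a\}$, in that case $Ls$ is again the entire lower ray and $L\cap Ls=\emptyset$ is unachievable; o-$2$ transitivity gives you no control over the support of $h_0$ away from the finitely many prescribed points, so you cannot choose a better seed, and finite meets of positive conjugates can never empty a lower support that is co-initial for all conjugates. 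So the ``seed'' construction genuinely breaks down, and the final paragraph promoting seeds to transitivity of $\St^-(a)$ is in any case only a sketch.

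The deeper problem is that no repair along these lines is possible, because the key claim is strictly stronger than the lemma and fails for some o-$2$ transitive $\ell$-permutation groups. If it held for every such group, it would hold for the order-dual as well, giving a nontrivial positive $h$ with $\suppo(h)$ bounded below and a nontrivial positive $k$ with $\suppo(k)$ bounded above; choosing $\epsilon$ with $\epsilon k>\epsilon$ and $\gamma\in\suppo(h)$, o-$2$ transitivity yields $g$ with $\epsilon g=\gamma$ and $(\epsilon k)g>\gamma$, whence $h\wedge k^g$ is a nontrivial element of \emph{bounded} support. But there exist \emph{pathologically} o-$2$ transitive $\ell$-permutation groups, in which the identity is the only element of bounded support (McCleary showed, for instance, that free $\ell$-groups admit faithful representations of exactly this kind); these groups are o-$n$ transitive for all $n$, as the lemma asserts, yet refute your key claim. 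Note that the lemma is needed here in full generality --- it is applied to o-primitive components $(G(\Delta),\pi(\Delta))$ and in Lemma 3.1, which are not assumed fully depressible --- whereas under full depressibility your key claim would be immediate (depress $h_0\vee 1$ to the $h_0\vee 1$-invariant convex set $\{\xi\mid\xi>a\}$), which indicates how much extra strength you were implicitly assuming. For comparison: the paper does not prove this lemma at all but cites \cite[Lemma 1.10.1]{G81}; the proof there is a finitary patching argument with $\vee$ and $\wedge$ that only ever prescribes or corrects the images of finitely many points at a time, and never requires any element to fix a whole ray pointwise.
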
 

We also need an immediate consequence of McCleary's Trichotomy \cite{Mc}:

\begin{prop}\label{dichot}
Let $(G,\Omega)$ be a transitive fully  depressible $\ell$-permutation group. Then $(G,\Om)$ is  $\mro$-primitive if and only if either
\begin{enumerate}
\item[\rm (I)] $(\Omega,\leq)$ is order-isomorphic to a subgroup of the reals and the action of $G$ on $\Om$ is the right regular representation$;$ or
\item[\rm  (II)] $(G,\Omega)$ is $\mro$-$2$ transitive$.$ 
\end{enumerate}
\end{prop}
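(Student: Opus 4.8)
The plan is to deduce the statement directly from McCleary's Trichotomy, which asserts that a transitive $\mro$-primitive $\ell$-permutation group $(G,\Om)$ is exactly one of three types: (a) \emph{regular}, with $\Om$ order-isomorphic to a subgroup of $\RR$ and $G$ acting by its right regular representation; (b) \emph{periodically $\mro$-primitive}, meaning there is a fixed-point-free $z$ in the centre of $G$ with $\alpha<\alpha z$ for all $\alpha$, such that $(G/\gp z\rp,\Om/\gp z\rp)$ is $\mro$-$2$ transitive on the resulting ``circle''; or (c) $\mro$-$2$ transitive. Types (a) and (c) are precisely alternatives (I) and (II), so the entire content of the proposition is that, under full depressibility, the periodic type (b) cannot occur, together with the (routine) converse that (I) and (II) are themselves $\mro$-primitive.

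I would dispose of the converse first. If (II) holds and $\mC$ were a convex congruence with a non-singleton class $\Delta\neq\Om$, then choosing $\alpha<\beta$ in $\Delta$ and $\gamma\notin\Delta$ and using $\mro$-$2$ transitivity to find $g\in G$ fixing $\alpha$ and sending $\beta$ to $\gamma$ forces $\gamma\in\Delta$, a contradiction; hence $(G,\Om)$ is $\mro$-primitive. If (I) holds, the convex congruences of a regular action correspond to the convex subgroups of the ordered group $\Om\le\RR$; since subgroups of $\RR$ are Archimedean and therefore have no proper nontrivial convex subgroup, the action is $\mro$-primitive.

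The substance is to rule out case (b) when $(G,\Om)$ is fully depressible, and this is where I expect the main difficulty to lie. The key is that the central period $z$ controls supports in two opposing ways. On the one hand, every $g\in G$ commutes with $z$, so $\suppo(g)$ is $z$-invariant; if $g\ne1$ had bounded support, pick $\alpha$ with $\alpha g\neq\alpha$, and for large $n$ the point $\alpha z^n$ lies above $\suppo(g)$, whence $(\alpha g)z^n=(\alpha z^n)g=\alpha z^n$ and so $\alpha g=\alpha$, a contradiction. Thus a periodically $\mro$-primitive group has \emph{no} nontrivial element of bounded support. On the other hand, the $\mro$-$2$ transitivity of the quotient on $\Om/\gp z\rp$ provides a nontrivial stabilizer of a point $\bar\gamma$; lifting (after adjusting by a suitable power of $z$) gives $g\in G\setminus\{1\}$ fixing some $\gamma\in\Om$. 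Since $z$ is central, $g$ then fixes the entire cofinal and coinitial orbit $\{\gamma z^n\mid n\in\Z\}$, and consequently every connected component of $\suppo(g)$ is a \emph{bounded} interval contained in some $(\gamma z^n,\gamma z^{n+1})$.

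Finally I would invoke full depressibility: a nonempty component $C$ of $\suppo(g)$ is a convex $g$-invariant set, so $\dep(g,C)\in G$; but $\dep(g,C)$ is a nontrivial element of bounded support, contradicting the previous paragraph. Hence no transitive fully depressible $\mro$-primitive group can be periodically $\mro$-primitive, and McCleary's Trichotomy collapses to the dichotomy (I)/(II). The main obstacle is organising the two support arguments around the central element $z$ and extracting both the fixed point and the bounded component from the $\mro$-$2$ transitive action on the circle; once these structural facts about the periodic case are in hand, the depressibility contradiction is immediate.
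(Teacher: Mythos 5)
Your proposal is correct and takes essentially the same route as the paper, which states Proposition~2.4 as an ``immediate consequence'' of McCleary's Trichotomy; the implicit content is exactly your argument that full depressibility would manufacture a nontrivial element of bounded support by depressing a point-stabilizer element to one bounded component of its support, while commutation with the fixed-point-free period $z$ forces every nontrivial element of a periodically o-primitive group to have unbounded support. One small caution: in McCleary's theorem the period $z$ centralizes $G$ but lives in $\Aut(\overline{\Omega},\leq)$ (the Dedekind completion) and need not belong to $G$, so rather than ``adjusting by a suitable power of $z$'' (which may leave $G$) you should take the nontrivial point stabilizer directly from the periodic structure (periodic groups are not regular, and $G_\alpha$ acts transitively on each interval between consecutive fixed points $\alpha z^n$); with that repair the rest of your argument goes through unchanged.
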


Transitive $\mro$-primitive $\ell$-permutation groups of type (II) are non-abelian. 

For each $h\in G$, let  $$X_{h}:=\{ [h^{-1},h^g] \mid g\in G\} \quad \hbox{and}\quad
W_{h}=\bigcup\{X_{h^g}\mid g\in G,\; [X_{h},X_{h^g}]\neq 1\}.$$
The sets $X_h$, $W_h$ are evidently definable in the first-order language of group theory.

For any subset $S$ of $G$, we write $\C^2_G(S)$ as shorthand for $\C_G(\C_G(S))$, the double centralizer of $S$ in $G$. If $S$ is definable in $G$ in the first-order language of group theory  then so is $\C_G^2(S)$.

\begin{prop} \label{propoprim}
Let $(G,\Om)$ be a transitive fully  depressible $\ell$-permut\-ation group.
Then $(G,\Om)$ is $\mro$-primitive if and only if $\C^2_G(W_g)=G$ for all $g\in G\setminus \{ 1\}$.
\end{prop}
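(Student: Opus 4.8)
The plan is to prove both directions by relating the double-centralizer condition $\C^2_G(W_g)=G$ to the o-primitivity of $(G,\Omega)$, using Proposition~\ref{dichot} to split the analysis according to McCleary's trichotomy. I would first analyse the structure of the sets $X_h$ and $W_h$. For $h\in G\setminus\{1\}$, the element $[h^{-1},h^g]=h\,(h^g)^{-1}h^{-1}h^g$ is a commutator supported, roughly, where $\suppo(h)$ and $\suppo(h^g)=\suppo(h)g$ overlap; the key observation is that $[X_h,X_{h^g}]\neq1$ forces the supports of the relevant conjugates to interact, so $W_h$ is a union of conjugates $X_{h^g}$ whose supports ``chain together'' across $\Omega$. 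I expect that $W_h$ generates (or at least has the same centralizer as) a subgroup whose support is a union of o-blocks in a single convex congruence determined by $h$, and the whole point is that o-primitivity is exactly the condition that forces this support to be all of $\Omega$.

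First, I would prove the forward direction. Assume $(G,\Omega)$ is o-primitive, and fix $g\neq1$. By Proposition~\ref{dichot} there are two cases. In case (I), $G$ acts as the regular representation of a subgroup of $\RR$, so $G$ is abelian; then every commutator $[h^{-1},h^{g'}]$ is trivial, each $X_h=\{1\}$, hence $W_g=\{1\}$, and $\C_G(W_g)=G$, so $\C^2_G(W_g)=\C_G(G)=Z(G)$. Here I must check that this reduces correctly—in the regular abelian case $Z(G)=G$, so indeed $\C^2_G(W_g)=G$. In case (II), $(G,\Omega)$ is o-$2$ transitive (hence o-$n$ transitive for all $n$ by Lemma~\ref{2=n}) and non-abelian. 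The strategy is to show $\C_G(W_g)=\{1\}$, whence $\C^2_G(W_g)=\C_G(\{1\})=G$ immediately. To see $\C_G(W_g)=1$: using o-$n$ transitivity and full depressibility, I would show that $W_g$ contains elements with arbitrarily small prescribed support anywhere in $\Omega$, so any $f$ centralizing all of $W_g$ must fix every point of $\Omega$ and hence equal $1$. This is the heart of the argument and where o-$2$ transitivity is used essentially.

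For the converse, I would argue the contrapositive: if $(G,\Omega)$ is not o-primitive, produce some $g\neq1$ with $\C^2_G(W_g)\neq G$. Since $(G,\Omega)$ is transitive but not o-primitive, there is a proper nontrivial convex congruence $\mathcal C$; choose $g\in\rst(\Delta)\setminus\{1\}$ supported inside a single $\mathcal C$-class $\Delta$ (this uses full depressibility to guarantee such $g$ exists). Then every conjugate $h^{g'}$ of any $h$ built from $g$ has support inside a single $\mathcal C$-class, so $\suppo(W_g)$ lies in the union of $\mathcal C$-classes meeting $\suppo(g)$, and in fact $W_g\subseteq\rst(\Delta')$ for a block $\Delta'$ of a congruence no finer than $\mathcal C$. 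I would then exhibit a nontrivial element $f\in G$ that centralizes $\C_G(W_g)$ but moves some block, or more directly show $\C^2_G(W_g)\subseteq\rst$ of the relevant convex set, so that $\C^2_G(W_g)$ cannot be all of $G$ because elements moving $\Delta$ to a different block lie outside it. Concretely, any element of $\rst(\Delta'')$ for a block $\Delta''$ disjoint from the support region centralizes $W_g$, and such elements exist and are nontrivial by transitivity and full depressibility, forcing $\C_G(W_g)\neq1$ and $\C^2_G(W_g)\subsetneq G$.

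The main obstacle I anticipate is the precise bookkeeping in both directions of showing that the double centralizer $\C^2_G(W_g)$ ``sees'' exactly the convex-congruence structure—specifically, controlling $\C_G(W_g)$ tightly enough. In the o-$2$ transitive case, the delicate point is verifying that the commutators $[h^{-1},h^{g'}]$ appearing in $W_g$ genuinely generate enough to pin down the support everywhere; one must rule out degenerate configurations where $X_h$ collapses because the supports of $h$ and $h^{g'}$ are disjoint (giving trivial commutators). The role of the condition $[X_h,X_{h^g}]\neq1$ in the definition of $W_h$ is precisely to select the conjugates whose supports overlap, and I expect the crux of the proof is a lemma computing $\suppo(W_g)$ and the centralizer $\C_G(W_g)$ in terms of the smallest convex congruence relative to which $g$ acts within a block, after which both implications follow from the trichotomy.
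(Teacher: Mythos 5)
Your overall skeleton matches the paper's: split via Proposition~\ref{dichot}, dispose of the abelian regular case by $X_h=\{1\}$, reduce the o-$2$ transitive case to showing $\C_G(W_g)=\{1\}$, and prove the converse contrapositively by locating $W_g$ inside $\rst(\Delta)$ for a proper o-block $\Delta$. Your converse is essentially sound and in fact somewhat leaner than the paper's (which invokes the full computations $\C^2_G(W_g)=\rst(\Delta)$ of Propositions~\ref{2centd} and~\ref{2centm}, plus a separate locally abelian analysis); one repair: ``$\C_G(W_g)\neq1$ and $\C^2_G(W_g)\subsetneq G$'' is a non sequitur as written — what you need, and what your construction actually gives, is a \emph{non-central} element $y\in\rst(\Delta'')\cap\C_G(W_g)$ (non-central because a conjugate $y^{f}$ has support disjoint from $\suppo(y)$), whence $\C^2_G(W_g)\subseteq\C_G(y)\neq G$. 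With that fix your argument even covers the locally abelian case uniformly, where $W_g=\emptyset$ and $\C^2_G(W_g)=Z(G)\neq G$.

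The genuine gap is in the forward direction, case (II), and it sits exactly where the paper invests its technical effort. First, your reading of the condition $[X_h,X_{h^g}]\neq1$ is backwards: you say it ``selects the conjugates whose supports overlap,'' but the whole point of the paper's Lemma~\ref{primd} (the technical lemma of Section~3, with the six-point sequences and o-$6$-transitivity) is to show that in the o-$2$ transitive case conjugates with \emph{disjoint} supports still satisfy $[X_h,X_{h^g}]\neq1$; this is what yields $W_h=\bigcup\{X_{h^g}\mid g\in\St(\Delta)\}$ (Lemma~\ref{5centd}(b)), and even in the overlapping case noncommutation is not automatic — it requires the explicit constructions (via Remark~\ref{analogues}) in that proof. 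Without this membership statement your centralizer computation cannot start: to force $f\in\C_G(W_g)$ to fix a given $\delta$, you must know that some $X_{g^k}$ with $\delta\in\suppo(g^k)$ actually lies in $W_g$. Second, your claim that $W_g$ contains ``elements with arbitrarily small prescribed support'' is false as stated: every element of $X_h$ has the form $[h^{-1},h^u]=u^{-h^{-1}}u\,u^{-h}u$, whose support spreads over the three translates $\suppo(u)h^{-1}$, $\suppo(u)$, $\suppo(u)h$; moreover supports need not be convex, so Remark~\ref{trivial} does not apply, and an $f$ commuting with such an element could a priori shuffle points among the pieces. The paper's fix is the lattice trick of Lemma~\ref{50}(a)(ii): choosing the conjugator $u>1$ with small support, one has $c\vee1=u^2$ and $(c\vee1)^f=c\vee1$, so $f$ preserves the single small set $\suppo(u)$ — it is this $\ell$-group identity, not smallness of $\suppo(c)$ itself, that localizes $f$ and lets the shrinking-interval argument conclude $\C_G(W_g)=1$. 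So: right architecture and a correct (indeed simplified) converse, but the heart of the forward direction — Lemma~\ref{primd} plus the $\vee$-trick — is missing and partly mischaracterized in your plan.
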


This result has the following immediate consequence.

\begin{cor} If $(G_1,\Omega_1)$, $(G_2,\Omega_2)$ are transitive fully depressible $l$-groups that satisfy the same first-order sentences in the language of group theory,
and one of these groups is $\mro$-primitive, then so is the other.  \end{cor}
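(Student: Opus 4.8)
The plan is to observe that Proposition~\ref{propoprim} reduces o-primitivity to a condition that is expressible by a single first-order sentence in the language of groups, after which the corollary follows by a routine transfer between elementarily equivalent structures. So the entire task is to exhibit that sentence.

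First I would write out explicitly the sentence encoding the criterion ``$\C^2_G(W_g)=G$ for all $g\in G\setminus\{1\}$''. Since $h^g$ and $[a,b]$ are group-theoretic terms, membership in $X_h$ is defined by the formula $\xi(s,h):\ \exists g\,(s=[h^{-1},h^g])$, and the relation $[X_h,X_{h^g}]\neq1$ is defined by $\exists a\,\exists b\,(\xi(a,h)\wedge\xi(b,h^g)\wedge[a,b]\neq1)$. Combining these, membership in $W_h$ is captured by a formula $\omega(s,h)$ with free variables $s,h$, namely $\exists g\,\big(\,(\exists a\,\exists b\,(\xi(a,h)\wedge\xi(b,h^g)\wedge[a,b]\neq1))\wedge\xi(s,h^g)\,\big)$. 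Centralizers are then handled in the usual way: $z\in\C_G(W_g)$ is $\forall s\,(\omega(s,g)\to[z,s]=1)$, and hence $x\in\C^2_G(W_g)$ is the formula $\chi(x,g):\ \forall z\,\big(\,[\forall s\,(\omega(s,g)\to[z,s]=1)]\to[x,z]=1\,\big)$. The desired criterion is then the single sentence
$$\sigma:\quad \forall g\,\Big(g\neq1\ \to\ \forall x\,\chi(x,g)\Big),$$
which, exactly as flagged by the two definability remarks preceding Proposition~\ref{propoprim}, stays entirely within the pure group language.

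Next I would invoke Proposition~\ref{propoprim} for each of the two groups. Since $(G_1,\Omega_1)$ and $(G_2,\Omega_2)$ are both transitive and fully depressible, the proposition gives that $(G_i,\Omega_i)$ is o-primitive if and only if $G_i\models\sigma$, for $i=1,2$. Because $G_1$ and $G_2$ satisfy the same first-order sentences in the language of groups, we have $G_1\models\sigma$ if and only if $G_2\models\sigma$. Combining these equivalences, $(G_1,\Omega_1)$ is o-primitive if and only if $(G_2,\Omega_2)$ is, which is precisely the assertion.

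There is essentially no hard step here; the only thing requiring care is the bookkeeping of the first paragraph---checking that the set-builder descriptions of $X_h$, $W_h$, and the double centralizer really do translate into first-order formulas with the stated free variables, and in particular that no appeal to the lattice operations $\vee,\wedge$ sneaks in, so that $\sigma$ is genuinely a sentence of group theory. Once $\sigma$ is in hand, the transfer via elementary equivalence is immediate.
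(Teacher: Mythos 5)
Your proof is correct and follows exactly the route the paper intends: the paper treats this corollary as an immediate consequence of Proposition~4 together with its two preceding remarks (that $X_h$, $W_h$ are first-order definable and that definability is preserved under double centralizers), and your explicit formulas $\xi$, $\omega$, $\chi$ and the sentence $\sigma$ simply carry out that flagged bookkeeping, correctly handling even the degenerate case $W_g=\emptyset$ (where $\chi(\cdot,g)$ defines the centre, matching $\C^2_G(\emptyset)=\C_G(G)$). Nothing further is needed.
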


We can now deduce Theorems \ref{reals} and \ref{GHQ2}. 
Let $\Lambda=\RR$ or $\Lambda=\QQ$.
Then $\Aut(\Lambda,\leq)$  acts o-$2$-transitively on $\Lambda$ and so it is o-primitive and non-abelian.  Thus if $(G,\Omega)$ is a transitive fully  depressible $\ell$-permutation group satisfying the same first-order sentences (in the language of groups) as ${\rm Aut}(\Lambda,\leq)$, then $(G,\Om)$ is non-abelian and $G$ acts o-primitively on $\Omega$ by Proposition \ref{propoprim}.  Hence $G$ acts o-$2$-transitively by Proposition \ref{dichot}.  Theorems A and B now follow directly from 
Corollary \ref{GHgp} and the result \cite[Theorem 2C*]{G81} cited in the Introduction.
\medskip

It remains now to prove Proposition \ref{propoprim}.  This will be an easy consequence of results in Sections 4 and 5. 

\section{A technical lemma}

\begin{lemma} \label{primd}
Let $(G,\Omega)$ be o-$2$ transitive and $g,h\in G$ with $\suppo(h)\cap \suppo(h^g)=\emptyset$ and $h\neq 1$.
Then there are elements $f,k\in G$ such that $$[h^{-1},h^f][h^{-g},h^{gk}]\neq [h^{-g},h^{gk}][h^{-1},h^f].$$
\end{lemma}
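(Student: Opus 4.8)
The plan is to reduce the non-commutation of the two commutators to a statement about supports, and then to build the two factors as bumps whose supports can be forced to overlap by stretching. Throughout write $A=\suppo(h)$ and recall $\suppo(h^{g})=Ag$; the hypothesis says $A$ and $Ag$ are disjoint. For $f,k\in G$ put $u=[h^{-1},h^{f}]$ and $v=[h^{-g},h^{gk}]$. Since the support of a conjugate $x^{y}$ equals $\suppo(x)y$ and the support of a commutator lies in the union of the supports of its entries, we have $\suppo(u)\subseteq A\cup Af$ and $\suppo(v)\subseteq Ag\cup Agk$. The elementary device I would use is the following support criterion: if there is a point $\lambda$ with $\lambda\notin\suppo(u)$ but $\lambda v\in\suppo(u)$, then $u$ and $v$ do not commute. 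Indeed, setting $\mu=\lambda v\in\suppo(u)$ we get $\mu u^{v}=\mu v^{-1}uv=\lambda uv=\lambda v=\mu$, because $\lambda\notin\suppo(u)$; thus $u^{v}$ fixes $\mu$ while $u$ moves it, so $u^{v}\neq u$ and hence $uv\neq vu$. It therefore suffices to produce $f$ and $k$ for which such a $\lambda$ exists.

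First I would fix a bump of $h$, that is, a maximal open interval $(a,b)$ of $A$; after reflecting the order if necessary I may assume $h$ moves the points of $(a,b)$ upwards. Since $A$ and $Ag$ are disjoint, any bump $(c,d)$ of $h^{g}$ is disjoint from $(a,b)$, so (passing to another bump of $h^{g}$, or reflecting) I may assume $b\leq c$, i.e.\ the chosen bump of $h^{g}$ lies to the right of $(a,b)$. Using o-$2$ transitivity and Lemma \ref{2=n} I would choose $f$ so that $h^{f}$ has a single bump meeting $(a,b)$, with left endpoint $a'$ satisfying $a<a'<b$ and right endpoint $b'$ pushed far to the right; the essential point is that an order-automorphism may stretch a bounded interval clear across the gap between $A$ and $Ag$, so this is possible even though $A$ and $Ag$ are disjoint. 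A direct computation with the one overlapping bump then shows that $u=[h^{-1},h^{f}]\neq1$ and that the rightmost part of $\suppo(u)$ coming from this bump terminates at $\omega=bh^{f}$, which by the choice of $f$ lies in $(c,d)$, with a genuine gap immediately to its right.

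Next I would choose $k$ so that $h^{gk}=(h^{g})^{k}$ overlaps $h^{g}$ inside the bump $(c,d)$, making $v=[h^{-g},h^{gk}]$ a non-trivial element whose support lies in $(c,d)$ and straddles $\omega$. By selecting the direction and, via o-$n$ transitivity, the size of the displacement, I can arrange that $v$ moves a point $\lambda$ lying just to the right of $\omega$, hence outside $\suppo(u)$, to a point $\lambda v$ just to the left of $\omega$, hence inside $\suppo(u)$. The support criterion of the first paragraph then gives $uv\neq vu$, which is precisely the conclusion of the lemma for this $f$ and $k$.

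The main obstacle is the control of the supports of the two commutators when $h$ and $h^{g}$ have many bumps, since $f$ and $k$ act on all of these at once: I must ensure that the images under $f$ of the remaining bumps of $h$ do not intrude into a neighbourhood of $\omega$ and destroy the gap on its right, and likewise that the images under $k$ of the remaining bumps of $h^{g}$ do not interfere near $\omega$. This is where the real work lies, choosing the bump $(a,b)$ and the automorphisms $f,k$ so that the boundary point $\omega$ is isolated in $\suppo(u)$ on its right. The disjointness of $A$ and $Ag$, together with the freedom to stretch one bump across the gap between them while compressing the other bumps away from $(c,d)$, is exactly what makes this separation achievable.
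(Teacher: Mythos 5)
Your opening criterion (if $\lambda\notin\suppo(u)$ but $\lambda v\in\suppo(u)$ then $u^v$ fixes $\lambda v$ while $u$ moves it, so $uv\neq vu$) is correct, but the construction you hang on it has a genuine gap, and it is exactly the one you defer at the end as ``where the real work lies''. The problem is that the set you must keep clear of a right neighbourhood of $\omega=bh^{f}$ is not controlled by where $f$ sends the other bumps of $h$. Writing $u=[h^{-1},h^{f}]=h\,h^{-f}h^{-1}h^{f}$ and $A=\suppo(h)$, one checks that $\suppo(u)\subseteq A\cup Ah^{f}$, and the second set is the image of $A$ under $h^{f}$ itself, not under $f$. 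If $h$ has a support component $B$ abutting your chosen bump on the right, i.e.\ with $\inf B=b$, then $\inf(Bh^{f})=bh^{f}=\omega$, so for \emph{every} admissible choice of $f$ the support of $u$ reaches right up to $\omega$ from above: for $\theta$ just above $\omega$ we have $\theta h=\theta$ (as $\theta\in(c,d)$, disjoint from $A$) but $\theta h^{-f}\in B\subseteq A$, whence $\theta u=\theta h^{-f}h^{-1}h^{f}\neq\theta$. Nothing in the hypotheses rules such configurations out: take $h\in\Aut(\RR,\leq)$ with $\suppo(h)=\bigcup_{n\in\Z}(x_n,x_{n+1})$ a chain of abutting intervals inside $(0,1)$, and $g$ a translation by $5$; then $\suppo(h)\cap\suppo(h^{g})=\emptyset$, yet every bump of $h$ abuts its neighbours on both sides, so no choice of bump $(a,b)$, no reflection, and no stretching $f$ produces the interval gap your argument needs next to $\omega$. (The same mechanism defeats your earlier step ``choose $f$ so that $h^{f}$ has a single bump meeting $(a,b)$'': a bump of $h$ abutting $a$ from the left is sent by $f$ to a bump abutting $a'=af$ from the left, inside $(a,b)$.) Two smaller unresolved points: the endpoints $a,b,c,d$ need not lie in $\Omega$, so o-$n$-transitivity does not let you anchor $f$ at them; and you never verify $u\neq1$ or exhibit the point $\lambda$ with the required displacement.

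The paper's proof avoids support control entirely, and this is worth internalising: it fixes two finite increasing configurations of points (six points inside the supporting interval of $h^{g}$, and an eight-point configuration straddling the two supporting intervals), uses o-$6$- and o-$8$-transitivity (available by Lemma \ref{2=n}) to choose $k$ and $f$ carrying one configuration to the other, and then evaluates the two commutators $w_1=[h^{-1},h^{f}]$ and $w_2=[h^{-g},h^{gk}]$ at a single point, computing $\lambda w_1w_2=\gamma\neq\gamma h^{-g}=\lambda w_2w_1$. The only global facts used are that $h$ fixes the chosen points in the supporting interval of $h^{g}$ and vice versa, which is immediate from disjointness of supports; the bump structure of $h$, and the shape of $\suppo(w_i)$, play no role whatsoever. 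If you wish to salvage your criterion, the lesson is the same: pin down the values of $h^{f}$ and $h^{gk}$ at finitely many chosen points of $\Omega$ and verify the displacement inequality pointwise, rather than arguing through the geometry of $\suppo(u)$, which o-$n$-transitivity is too weak to control.
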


\begin{proof} 
Since $\suppo(h)\cap \suppo(h^g)=\emptyset$, after interchanging $h$ and $h^g$ if necessary, 
we may assume that there are supporting intervals $\Delta_1, \Delta_2:=\Delta_1g$ of $h$ and $h^g$, respectively, such that
 $\delta_1<\delta_2$ for all $\delta_i\in \Delta_i$ ($i=1,2$). 
 Without loss of generality, $\delta_1 h>\delta_1$ for all $\delta_1\in \Delta_1$ (and so $\delta_2 h^g>\delta_2$ for all $\delta_2\in \Delta_2$).
Let $\gamma,\delta, \lambda, \mu\in \Delta_2$ with
$$\gamma<\gamma h^g<\mu h^{-g}<\delta<\lambda<\mu<\delta h^g<\lambda h^g.$$
The six elements 
$$\gamma,\;\gamma h^g,\; \mu h^{-g},\;\delta,\;\mu,\;\lambda h^g \eqno(1) $$
constitute a strictly increasing sequence in $\Delta_1$.  Choose $\xi_{-1},\xi_0\in \Delta_1$ with $\xi_{-1}<\xi_0$, and elements $\xi_1,\xi_2\in \Delta_2$ with  
$$\xi_0<\xi_1<\xi_1 h^g<\xi_2<\xi_2 h^g.$$   Then the six elements
$$\xi_{-1},\;\xi_0,\;\xi_1,\;\xi_1 h^g,\;\xi_2,\;\xi_2 h^g \eqno(2)$$ 
constitute a strictly increasing sequence in $\Omega$.  Using o-$6$-transitivity we can find an element $k$ of $G$ that maps the $n$th element of sequence ($2$) to the $n$th element of sequence ($1$) for each $n$.  
Since $\suppo(h)\cap \suppo(h^g)=\emptyset$ and $\xi_{-1}\in \Delta_1\subseteq \suppo(h)$ we have $\gamma h^{gk}=\gamma k^{-1}h^gk=\xi_{-1}h^gk=\xi_{-1}k=\gamma$.  This and other similar easy calculations show that
$$
\gamma h^{gk}=\gamma,\quad (\gamma h^g)h^{gk}=\gamma h^g,\quad (\mu h^{-g})h^{gk}=\delta,\quad  \mu h^{gk}=\lambda h^g.$$

Now choose $\alpha\in \Delta_1\subseteq \suppo(h)$ and $\beta\in (\alpha h^{-1},\alpha),$ and choose 
$\zeta_1,\dots,\zeta_4\in \suppo(h)$ such that the eight elements 
$$\zeta_4,\;\zeta_3,\;\zeta_2,\;\zeta_1,\;\zeta_4h,\;\zeta_3h,\;\zeta_2h,\;\;\zeta_1h$$
form a strictly increasing sequence. 
Since $\suppo(h)<\suppo(h^g)$, the eight elements 
$$\beta h^{-3},\,\;\alpha  h^{-3},\,\;\beta h^{-2},\,\;\alpha  h^{-2},\,\;\gamma h^{-g},\,\;\gamma ,\,\;\delta,\,\;\lambda$$
also form a strictly increasing sequence, and we can find an element $f\in G$ that maps the $n$th term of the former of these two sequences to the $n$th term of the latter for each $n$.
A routine calculation now shows that  
$$\alpha h^{-2}h^{f}=\lambda, \quad \beta h^{-2}h^{f}=\delta,\quad \alpha h^{-3}h^{f}=\gamma, \;\;\;\hbox{and}\;\;\; \beta h^{-3}h^{f}=\gamma h^{-g}.$$

Let
$$ w_1:=[h^{-1},h^f],\quad \hbox{and} \quad
w_2:=[h^{-g},h^{gk}].$$ 
Further simple calculations show that $$\lambda w_1=\gamma\quad \hbox{and}\quad \lambda w_2=\delta.
$$
Moreover, $$\gamma w_2= \gamma \quad \hbox{and}\quad  \delta w_1=\beta h^{-3}h^f=
\gamma h^{-g} .$$  Hence
$$\lambda w_1w_2=\gamma\neq \gamma h^{-g}=\delta w_1 = \lambda w_2 w_1.$$ 
\vskip-0.22in

\end{proof}

\section{Centralizers: the non-minimal case}
Throughout this section and the next, we assume that $(G,\Omega)$ is a fully depressible transitive $\ell$-permutation group, and write $T$, $\fK$
for its root system and spine.  

For each $h\in G$, define $X_h$, $W_h$ as in Section 2.
For each $\Delta\in T$, let $$Q_\Delta=\{ h\in \rst(\Delta)\mid (\exists \alpha\in \Om)(V(\alpha h,\alpha)=\kappa(\Delta) \}.$$ 
As $(G,\Om)$ is transitive and fully depressible, we have $Q_\Delta \neq \emptyset$. 
Since $(\rst(\Delta))^g=\rst(\Delta g)$ commutes with $\rst(\Delta)$ for $g\notin\St(\Delta)$,  we also have
$$X_h\subseteq \rst(\Delta)\quad\hbox{and} \quad W_h\subseteq \rst(\Delta)\qquad \hbox{for all }\ \Delta\in T\hbox{ and }h\in Q_\Delta.$$

We will use the following observation. 

\begin{remark}\label{trivial}  {\rm Let $(\Lambda,\leq)$ be a totally ordered set and $\frak S$ be a {\em finite} set of pairwise disjoint convex subsets of $\Lambda$. If $f\in \Aut(\Lambda,\leq)$ and $\frak S f=\frak S$, then $Sf=S$ for all $S\in \frak S$.}  \end{remark}

For the rest of this section we assume that {\em $\fK$ has no minimal element}. 
\begin{lemma} \label{40}
Let $\Delta\in T$ and $h\in Q_\Delta$.  
\begin{enumerate}
\item[\rm(a)]
Let  $\Delta'\in T$ with $\Delta'\subset \Delta$ and $\Delta' h\neq \Delta'$, and let $g\in \rst(\Delta')$ with $g\neq 1$. 
\begin{enumerate} 
\item[\rm (i)] 
Then $[h^{-1},h^g]\neq 1$. In particular, $X_h\neq 1$.
\item[\rm (ii)] 
If $f\in G$ and $[[h^{-1},h^g],f]=1$, then $\Delta' f = \Delta'$. In particular,
if $f\in\C_G(X_h)$ then $\Delta'f=\Delta'$.\end{enumerate}
\item[\rm (b)] If $\beta\in \suppo(h)$ and $f\in G$, then either $\beta f= \beta$ or 
$[[h^{-1},h^g],f]\neq1$ for some $g\in\rst(\Delta)$.
\end{enumerate}
\end{lemma}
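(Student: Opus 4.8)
The engine of the whole lemma is a single support-and-restriction computation for the commutator $w:=[h^{-1},h^g]$ in the case $g\in\rst(\Delta')$, where $\Delta'\subset\Delta$ is in $T$ and $\Delta'h\neq\Delta'$. Since $\Delta'h\neq\Delta'$, the automorphism $h$ carries the o-block $\Delta'$ off itself, so $\Delta'\subseteq\suppo(h)$ and the translates $\Delta'h^{n}$ are pairwise disjoint o-blocks. As $\suppo(g)\subseteq\Delta'$, the element $g$ fixes every $\Delta'h^{n}$ with $n\neq0$ pointwise, and a block-by-block evaluation of $h^{g}=g^{-1}hg$ along this $\langle h\rangle$-orbit shows that $h^{g}$ coincides with $h$ except on $\Delta'h^{-1}$ and $\Delta'$. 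Feeding this into $w=h\,h^{-g}h^{-1}h^{g}$ and tracing a point through the four factors, I would verify that (i) $w$ fixes every point outside $\Delta'h^{-1}\cup\Delta'\cup\Delta'h$, (ii) $xw=xg^{2}$ for all $x\in\Delta'$, and (iii) $\suppo(w)$ meets each of these three blocks, its traces being $\suppo(g)h^{-1}$, $\suppo(g^{2})$ and $\suppo(g)h$. This computation, purely mechanical but requiring careful block bookkeeping and attention to the right-action conventions, is the step I expect to be the main obstacle, and it is exactly where the hypothesis $\suppo(g)\subseteq\Delta'$ is used.

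Part (a)(i) is then immediate from (ii): $w$ restricts to $g^{2}$ on $\Delta'$, and an order-preserving bijection is torsion-free (if $\alpha g>\alpha$ then $\alpha g^{2}>\alpha$, and similarly below), so $g\neq1$ forces $g^{2}\neq1$ and hence $w\neq1$. For the clause $X_h\neq1$ I would exhibit one admissible pair $(\Delta',g)$: from $h\in Q_\Delta$ choose $\alpha$ with $V(\alpha h,\alpha)=\kappa(\Delta)$, so $\alpha$ and $\alpha h$ lie in distinct $\pi(\kappa(\Delta))$-classes; since $\fK$ has no minimal element there is $K\in\fK$ with $K\subsetneq\kappa(\Delta)$, necessarily $K\subseteq\pi(\kappa(\Delta))$ by Proposition \ref{ccc}, and the $K$-class $\Delta'$ of $\alpha$ then satisfies $\Delta'\subset\Delta$ and $\Delta'h\neq\Delta'$. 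Any nontrivial $g\in\rst(\Delta')$ (such exist since $(G,\Om)$ is transitive and fully depressible, so $\rst(\Delta')$ maps onto the o-primitive group $G(\Delta')$) now gives $[h^{-1},h^{g}]\in X_h\setminus\{1\}$.

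For (a)(ii), suppose $[w,f]=1$, that is $w^{f}=w$. Then $\suppo(w)f=\suppo(w)$, and since $f$ permutes the $\kappa(\Delta')$-classes, it permutes precisely those classes that meet $\suppo(w)$, which by (i) and (iii) are exactly $\{\Delta'h^{-1},\Delta',\Delta'h\}$. These three o-blocks are pairwise disjoint and convex, so Remark \ref{trivial} forces $f$ to fix each of them setwise; in particular $\Delta'f=\Delta'$. The special case $f\in\C_G(X_h)$ follows at once, because then $f$ centralizes the particular element $w=[h^{-1},h^{g}]\in X_h$.

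Finally, (b) is a localization of (a)(ii) obtained by shrinking blocks down to a single point. Arguing contrapositively, assume $f$ commutes with $[h^{-1},h^{g}]$ for every $g\in\rst(\Delta)$, and fix $\beta\in\suppo(h)$. For an arbitrary $\gamma\neq\beta$, the congruences $V(\beta h,\beta)$ and $V(\beta,\gamma)$ both lie in $\fK$; since $\fK$ has no minimal element there is $K\in\fK$ strictly below each of them, and the $K$-class $\Delta'$ of $\beta$ is then an element of $T$ with $\beta\in\Delta'\subset\Delta$, $\Delta'h\neq\Delta'$ and $\gamma\notin\Delta'$ (here $\Delta'\subset\Delta$ because $\beta,\beta h\in\Delta$, so $K\subsetneq V(\beta h,\beta)\subseteq\kappa(\Delta)$). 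Applying (a)(ii) to $\Delta'$ and any nontrivial $g\in\rst(\Delta')\subseteq\rst(\Delta)$ yields $\Delta'f=\Delta'$, whence $\beta f\in\Delta'f=\Delta'$ and so $\beta f\neq\gamma$. As $\gamma$ was arbitrary, $\beta f=\beta$, which is the contrapositive of the assertion. The only point needing care is the construction of these admissible small blocks for every competitor $\gamma$, which comes directly from $\beta,\beta h\in\Delta$ together with the absence of a minimal element in $\fK$.
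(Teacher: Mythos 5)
Your proof is correct and follows essentially the same route as the paper's: the same disjoint-support decomposition of $[h^{-1},h^g]=g^{-h^{-1}}gg^{-h}g$ over $\Delta'h^{-1},\Delta',\Delta'h$, the same appeal to Remark \ref{trivial} for (a)(ii), and for (b) the same construction of a small o-block $\Delta'\in T$ below both $V(\beta,\beta h)$ and a second congruence (the paper takes $\gamma=\beta f$ directly rather than quantifying over all $\gamma\neq\beta$, but this is the identical idea phrased contrapositively). Your added details --- the monotonicity argument that the translates $\Delta'h^{n}$ are pairwise disjoint, the explicit existence of an admissible pair $(\Delta',g)$ via the no-minimal-element hypothesis on $\fK$, and $g^2\neq1$ from order-preservation --- are all points the paper leaves implicit, and you supply them correctly.
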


\begin{proof}  (a) The elements $g^{h^{-1}},g, g^{h}$ have disjoint supports contained in $ \Delta'h^{-1},\Delta'$ and $\Delta'h$ respectively, and so the restrictions of $[h^{-1},h^g]=g^{-h^{-1}}gg^{-h}g$ to these three sets are
conjugates of $g^{-1}$ and $g^2$ and are non-trivial.  Assertion (i) follows. An arbitrary conjugate
$[h^{-1},h^g]^f$ has non-trivial restrictions to the distinct o-blocks $ \Delta'h^{-1}f,\Delta'f$ and $\Delta'hf$, and so if the hypothesis of (ii) holds then
$\{ \Delta'h^{-1},\Delta',\Delta'h\}f=\{\Delta'h^{-1},\Delta',\Delta'h\}$. Thus $f$ must map each of $\Delta'h^{-1},\Delta',\Delta'h$ to itself, by Remark \ref{trivial}.
 
(b) Suppose that $\beta f\neq \beta$. By Proposition \ref{ccc}, one of the convex congruences $V(\beta, \beta h), V(\beta,\beta f)$ contains the other. Let $\Delta'\in T$ be a non-singleton o-block that is strictly contained in the o-block containing $\beta$ for each of these congruences.  Then $\Delta'\subset \Delta$ and $\Delta' f\neq \Delta'$.
Let $g\in Q_{\Delta'}$; then $g\in\rst(\Delta')$ and from (a)(ii) we have $[[h^{-1},h^g],f]\neq1$.
\end{proof}  

\begin{lemma} \label{centd}
Let $\Delta\in T$ and $h\in Q_{\Delta}$.  
\begin{enumerate}
\item[\rm(a)]  $\C_G(X_h)$ contains the pointwise stabilizer of $\Delta$ and is contained in the pointwise stabilizer of $\suppo(h)$.
\item[\rm(b)] $$ W_{h} = \bigcup\{ X_{h^g}\mid g\in \St(\Delta)\}.$$ 
\item[\rm(c)]
$\C_G(W_h)$ is the pointwise stabilizer of $\Delta$.
\end{enumerate}\end{lemma}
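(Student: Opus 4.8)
The plan is to derive (a) directly from Lemma~\ref{40}, to get the inclusion $\subseteq$ of (b) from disjointness of supports, and then to concentrate all the real work into the single non-commutation statement that $[X_h,X_{h^g}]\neq1$ for every $g\in\St(\Delta)$. I begin with (a). Since $X_h\subseteq\rst(\Delta)$, every $w\in X_h$ satisfies $\suppo(w)\subseteq\Delta$, and a routine verification shows $w^f=w$ for each $f$ in the pointwise stabilizer of $\Delta$: on $\Delta$ use that $f$ fixes $\Delta$ pointwise while $\Delta$ is $w$-invariant, and off $\Delta$ use that $f$ fixes $\Delta$ setwise while $w$ fixes the complement of $\Delta$ pointwise. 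This gives the first containment. For the second, let $f\in\C_G(X_h)$ and $\beta\in\suppo(h)$; if $\beta f\neq\beta$ then Lemma~\ref{40}(b) furnishes $g\in\rst(\Delta)$ with $[[h^{-1},h^g],f]\neq1$, contradicting $f\in\C_G(X_h)$ since $[h^{-1},h^g]\in X_h$. Hence $f$ fixes $\suppo(h)$ pointwise, proving (a).

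For (b), note first that $X_{h^g}=(X_h)^g\subseteq\rst(\Delta)^g=\rst(\Delta g)$. If $g\notin\St(\Delta)$ then $\Delta$ and $\Delta g$ are distinct, hence disjoint, o-blocks of $\kappa(\Delta)$, so $X_h\subseteq\rst(\Delta)$ and $X_{h^g}\subseteq\rst(\Delta g)$ have disjoint supports and $[X_h,X_{h^g}]=1$. Thus every set occurring in $W_h$ is an $X_{h^g}$ with $g\in\St(\Delta)$, giving $W_h\subseteq\bigcup\{X_{h^g}\mid g\in\St(\Delta)\}$. For the reverse inclusion it suffices to show $[X_h,X_{h^g}]\neq1$ for all $g\in\St(\Delta)$, for then each such $X_{h^g}$ is literally one of the sets comprising $W_h$. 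To set this up, since $\fK$ has no minimal element I may choose $K\in\fK$ with $K\subset\kappa(\Delta)$; picking $\alpha$ with $V(\alpha h,\alpha)=\kappa(\Delta)$ and letting $\Delta'$ be the $K$-class of $\alpha$ yields $\Delta'\in T$, $\Delta'\subset\Delta$, and $\Delta'h\neq\Delta'$ (as $\alpha h\notin\Delta'$). With $g_1\in\rst(\Delta')\setminus\{1\}$ and $w_1:=[h^{-1},h^{g_1}]\in X_h$, Lemma~\ref{40}(a) gives $w_1\neq1$ and, crucially, that any $f$ with $[w_1,f]=1$ fixes $\Delta'$ setwise. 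The symmetric construction for $h^g$ produces $\Delta''\in T$, $\Delta''\subset\Delta$, $\Delta''h^g\neq\Delta''$, and $w_2:=[h^{-g},h^{gg_2}]\in X_{h^g}$ for which any $f$ commuting with $w_2$ fixes $\Delta''$ setwise. Hence $[w_1,w_2]=1$ would force both $\Delta'w_2=\Delta'$ and $\Delta''w_1=\Delta''$.

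The main obstacle is precisely to rule this out, i.e.\ to choose the data $\Delta',\Delta'',g_1,g_2$ so that at least one of $\Delta'w_2=\Delta'$, $\Delta''w_1=\Delta''$ fails. Here $\suppo(w_1)\subseteq\Delta'h^{-1}\cup\Delta'\cup\Delta'h$ and $\suppo(w_2)\subseteq\Delta''h^{-g}\cup\Delta''\cup\Delta''h^g$, and I must arrange these localized supports to overlap genuinely. I would exploit the transitivity of $\St(\Delta)$ on $\Delta$ together with the fact that $h$ and $h^g$ both act nontrivially on $\pi(\Delta)$ to position $\Delta'$ inside a region on which $w_2$ is active (equivalently, $\Delta''$ inside a region moved by $w_1$); this is a point-chasing argument in the spirit of Lemma~\ref{primd}, but lighter here, since the availability of arbitrarily small o-blocks lets me localize the commutators instead of invoking $\mro$-$2$ transitivity of the whole action.

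Finally, (c) is formal given (a) and (b). By (b), $\C_G(W_h)=\bigcap_{g\in\St(\Delta)}\C_G(X_{h^g})$, and each $h^g$ again lies in $Q_\Delta$, so applying (a) to $h^g$ sandwiches $\C_G(X_{h^g})$ between the pointwise stabilizer of $\Delta$ and the pointwise stabilizer of $\suppo(h)g$. Intersecting over $g\in\St(\Delta)$, the lower estimates give that $\C_G(W_h)$ contains the pointwise stabilizer of $\Delta$, while the upper estimates give that $\C_G(W_h)$ is contained in the pointwise stabilizer of $\bigcup_{g\in\St(\Delta)}\suppo(h)g$. Since $\St(\Delta)$ is transitive on $\Delta$ and $\suppo(h)\neq\emptyset$, this union is all of $\Delta$, whence $\C_G(W_h)$ is exactly the pointwise stabilizer of $\Delta$.
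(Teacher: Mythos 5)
Your parts (a) and (c), and the easy inclusion of (b), are correct and essentially identical to the paper's arguments (the paper also notes $\Delta f=\Delta$ in (a), but as you observe, Lemma \ref{40}(b) applies to arbitrary $f\in G$, so your shortcut is fine). The genuine gap is exactly the step you defer in (b), and the strategy you sketch for it cannot work. Your candidate witnesses are localized commutators $w_1=[h^{-1},h^{g_1}]$ with $g_1\in\rst(\Delta')$, $\Delta'h\neq\Delta'$, and $w_2=[h^{-g},h^{gg_2}]$ with $g_2\in\rst(\Delta'')$, $\Delta''h^g\neq\Delta''$. But $\Delta'$ and $\Delta'h$ are o-blocks of the same convex congruence, so $\Delta'h\neq\Delta'$ forces $\Delta'\cap\Delta'h=\emptyset$ and hence $\Delta'\subseteq\suppo(h)$; consequently $\suppo(w_1)\subseteq\Delta'h^{-1}\cup\Delta'\cup\Delta'h\subseteq\suppo(h)$, and symmetrically $\suppo(w_2)\subseteq\suppo(h^g)$. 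Now take $g\in\St(\Delta)$ with $\suppo(h)\cap\suppo(h^g)=\emptyset$ --- a non-vacuous situation (it is precisely the hypothesis of Lemma \ref{primd}, e.g.\ $h$ a single bump inside $\Delta$ moved off itself by $g$). Then $[w_1,w_2]=1$ for \emph{every} admissible choice of $\Delta',\Delta'',g_1,g_2$: no ``positioning'' of the small blocks can make the supports overlap, because all your candidates are trapped inside $\suppo(h)$ and $\suppo(h^g)$ respectively. So the claim that arbitrarily small o-blocks let you ``localize the commutators instead of invoking $\mro$-$2$ transitivity'' is false in the one case where the real work lies.

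The paper handles this by a case split. If some $\Gamma\in\pi(\Delta)$ is moved by both $h$ and $k=h^g$, a two-level localization (first $y\in Q_{\Delta'}$ with $\Delta'\subset\Gamma$, then $x\in Q_{\Delta''}$ with $\Delta''\subset\Delta'$ and $\Delta''y\neq\Delta''$) produces $[h^{-1},h^y]$ and $[k^{-1},k^x]$ acting as $y^2$ and $x^2$ on $\Delta'$, and the explicit computation $\delta''y^2x^2=\delta''y^2\neq\delta''x^2y^2$ for $\delta''\in\suppo(x)$ gives non-commutation; this is the part your sketch gestures at and could indeed be completed along your lines. If instead every $\Gamma\in\pi(\Delta)$ is fixed by $h$ or by $k$, the induced elements $\bar h,\bar k$ of the o-primitive component $(G(\Delta),\pi(\Delta))$ are nontrivial (since $h,k\in Q_\Delta$) with disjoint supports, which rules out type (I) of Proposition \ref{dichot} (in a regular representation nontrivial elements have full support), so the component is o-$2$ transitive and Lemma \ref{primd} applies to $\bar h$ and $\bar k=\bar h^{\bar g}$. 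Crucially, the witnesses it produces are $[h^{-1},h^f]$ and $[h^{-g},h^{gk}]$ for \emph{global} conjugators $f,k$ chosen by o-$6$-transitive point-chasing, whose supports are not confined to $\suppo(h)$ and $\suppo(h^g)$ --- exactly the resource your localized elements lack. Lemma \ref{primd} was proved in Section 3 precisely for this purpose, and your proof cannot avoid it (or an equivalent argument with non-localized conjugators).
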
 

\begin{proof} (a) The first inequality holds since $X_{h}$ moves only points in
$\Delta$. 

Let $f\in \C_{G}(X_{h})$.  Since $X_{h} \subseteq \rst(\Delta)$ we have
$X_h=X_h^f\subseteq \rst(\Delta)\cap \rst(\Delta f)$, and since
$\{\Delta g\mid g\in G\}$ partitions $\Omega$ and $X_h\neq1$ we have 
$\Delta f=\Delta$.  Thus $f\in \St(\Delta)$.
Let $\beta\in \suppo(h)$.  
Then $\beta f=\beta$ by Lemma \ref{40}(b) since $f\in \C_{G}(X_{h})$.

(b) Let $g\in G$.  
If $\Delta g \neq \Delta$, then  $\rst(\Delta)\cap \rst(\Delta g)=1$ and the elements of $X_h$ and $X_{h^g}$ have disjoint support. Thus $[X_h,X_{h^g}]=1$. Hence
$$W_{h} = \bigcup\{ X_{h^g}\mid g\in \St(\Delta), [X_h,X_{h^g}]\neq 1\}.$$ 

Now let $g\in \St(\Delta)$ and $k=h^g$. So $k\in Q_{\Delta}$.  

First suppose that there is some
$\Gamma\in \pi(\Delta)$ with $\Gamma h\neq \Gamma$ and
$\Gamma k\neq \Gamma$. 
Choose $\beta\in \Gamma$ and $\Delta'\in T$ with $\beta\in \Delta'\subset \Gamma$. 
We claim that there is an element $y\in Q_{\Delta'}$
with $\beta y\neq \beta$.  Choose $\beta'\in \Delta'$ such that $\beta$, $\beta'$ belong to different $\pi(\Delta')$-classes.  By transitivity there
is an element $y'\in G$ with $\beta y'=\beta'$. Evidently $\Delta'y'=\Delta'$, and the element $y:=\dep(y',\Delta')$ has the required properties.

Choose $\Delta''\in T$ with $\beta\in \Delta''\subset \Delta'$ and $\Delta''\neq \Delta''y$.  Arguing as above we can find  $x\in Q_{\Delta''}$ with $\beta\in \suppo(x)$.
Write $a:=[k^{-1},k^{x}]=x^{-k^{-1}}xx^{-k}x$.  Thus 
$$\suppo(a) \subset (\Delta'')k^{-1}\cup \Delta'' \cup \Delta'' k \subset (\Delta')k^{-1}\cup \Delta' \cup \Delta' k,$$ 
and the unions above are disjoint unions since $\Delta'\subset \Gamma$ and $\Gamma k\neq\Gamma$.  On the classes 
$\Delta'' k^{-1}, \Delta'', \Delta'' k$ the element $a$ agrees with
$x^{-k^{-1}}, x^2, x^{-k}$ respectively, none of which is the identity since $x\neq 1$.
Therefore
$$\delta' a=\delta' x^2\quad \hbox{for all } \delta'\in \Delta'.$$  
Since $[h^{-1},h^y]=y^{-h^{-1}}yy^{-h}y$ we also have
$$\suppo([h^{-1},h^y]) \subset \Delta'h^{-1}\cup \Delta' \cup \Delta' h,\quad\hbox{and}\quad \delta' [h^{-1},h^y]=\delta' y^2\quad\hbox{for all }\delta'\in \Delta'. $$
Since $\suppo(x)\subseteq \Delta''$ which is disjoint from $\Delta'' y^2$, for any $\delta''\in \suppo(x)$ we have
$$\delta'' [h^{-1},h^y]a =\delta'' y^2x^2=\delta'' y^2\neq \delta'' x^2y^2=\delta'' a[h^{-1},h^y].$$
But $[h^{-1},h^y]\in X_h$ and $a\in X_{k}$.
Hence $[X_h,X_{k}]\neq 1$ and $X_{k}\subseteq W_h$.  

Now suppose instead that $\Gamma k=\Gamma$ or $\Gamma h=\Gamma$ for all $\Gamma \in \pi(\Delta)$. Then $(G(\Delta),\pi(\Delta))$ cannot be abelian and so is of type (II) in Lemma \ref{dichot}.    Lemma \ref{primd} gives elements of $X_h$ and $X_k$ whose images in $(G_\Delta,\pi(\Delta))$ fail to commute, and again we conclude that $X_k\subseteq W_h$.

(c)  The pointwise stabilizer of $\Delta$ lies in $\C_G(W_h)$ 
since $W_h\subseteq \rst(\Delta)$.

Let $\delta\in \Delta$ and $\alpha\in \suppo(h)$.
Choose $g\in \rst(\Delta)$ with $\alpha g=\delta$. So $\delta\in \suppo(h^g)$.
By (a), $\C_G(X_{h^g})$ fixes each point of $\suppo(h^g)$ and so fixes $\delta$.  Since $\C_G(W_h)\subseteq \C_G(X_{h^g})$ by (b), we conclude that 
$\C_G(W_h)$ fixes $\delta$.  The assertion follows.  \end{proof}

\begin{prop}\label{2centd}  Let $\Delta\in T$. Then $\C_{G}^2(W_h)=\rst(\Delta)$ for each $h\in Q_{\Delta}$. In particular, $\C^2_G(W_h)$ is independent of the choice of $h\in Q_\Delta{:}$
$$\C_G^2(W_h)=\C_G^2(W_{h'})\quad \hbox{for all}\;\;  h,h'\in Q_{\Delta}.$$
\end{prop}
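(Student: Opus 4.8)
The plan is to build on Lemma~\ref{centd}(c), which says that $\C_G(W_h)$ equals the pointwise stabilizer $P$ of $\Delta$, \emph{for every} $h\in Q_\Delta$. Consequently $\C_G^2(W_h)=\C_G(P)$ depends only on $\Delta$ and not on $h$, so the ``in particular'' clause (the displayed equality $\C_G^2(W_h)=\C_G^2(W_{h'})$) is automatic once the main identity is proved. Thus the whole proposition reduces to the single statement $\C_G(P)=\rst(\Delta)$, which I would establish by proving the two inclusions.

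For $\rst(\Delta)\subseteq\C_G(P)$ I would simply compare supports. Any $u\in\rst(\Delta)$ has $\suppo(u)\subseteq\Delta$, while any $v\in P$ fixes $\Delta$ pointwise and hence has $\suppo(v)\cap\Delta=\emptyset$. The supports of $u$ and $v$ are therefore disjoint, so $u$ and $v$ commute and $u\in\C_G(P)$.

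The real content is the reverse inclusion $\C_G(P)\subseteq\rst(\Delta)$, equivalently: every $g\in\C_G(P)$ fixes each point lying outside $\Delta$. Suppose, for contradiction, that $\alpha\notin\Delta$ and $\alpha g\neq\alpha$. Here I would invoke the standing hypothesis of this section that $\fK$ has no minimal element. Since $\fK$ is a chain (Proposition~\ref{ccc}), the smaller of the two congruences $V(\alpha,\alpha g)$ and $\kappa(\Delta)$, call it $\mathcal{D}$, lies in $\fK$, and by the no-minimal-element hypothesis there is $\mathcal{C}\in\fK$ with $\mathcal{C}\subsetneq\mathcal{D}$. Let $\Delta_\alpha$ be the $\mathcal{C}$-class of $\alpha$, so $\Delta_\alpha\in T$. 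From $\mathcal{C}\subseteq\kappa(\Delta)$ and $\alpha\notin\Delta$ I get $\Delta_\alpha\cap\Delta=\emptyset$, and from $\mathcal{C}\subsetneq V(\alpha,\alpha g)$ I get that $\alpha$ and $\alpha g$ lie in distinct $\mathcal{C}$-classes, so $\alpha g\notin\Delta_\alpha$ and hence $\Delta_\alpha g\neq\Delta_\alpha$ (disjoint o-blocks). As $\Delta_\alpha\in T$ and $(G,\Omega)$ is transitive and fully depressible, $Q_{\Delta_\alpha}\neq\emptyset$; choosing $f\in Q_{\Delta_\alpha}$ gives a nontrivial $f\in\rst(\Delta_\alpha)$, and since $\Delta_\alpha\cap\Delta=\emptyset$ this $f$ fixes $\Delta$ pointwise, i.e.\ $f\in P$. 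Then $[f,g]=1$ forces $f^g=f$, so $\suppo(f)=\suppo(f^g)=\suppo(f)g\subseteq\Delta_\alpha g$; combined with $\suppo(f)\subseteq\Delta_\alpha$ and $\Delta_\alpha\cap\Delta_\alpha g=\emptyset$ this yields $\suppo(f)=\emptyset$, contradicting $f\neq1$. Hence $\alpha g=\alpha$, and the inclusion follows.

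I expect the third paragraph to be the only delicate point. The crux is to manufacture, for each point $\alpha\notin\Delta$ moved by $g$, a single nontrivial element of the pointwise stabilizer of $\Delta$ supported inside one o-block $\Delta_\alpha$ that $g$ pushes off itself; commutation then collides with $\Delta_\alpha g\cap\Delta_\alpha=\emptyset$. Securing an o-block that is simultaneously disjoint from $\Delta$, displaced by $g$, and equipped with a nontrivial rigid stabilizer is exactly where the no-minimal-element hypothesis of this section and full depressibility (through $Q_{\Delta_\alpha}\neq\emptyset$) are needed; without a supply of arbitrarily fine o-blocks one could not localize the witness $f$ near $\alpha$ while keeping it off $\Delta$.
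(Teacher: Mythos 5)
Your proof is correct, and while it rests on the same key fact as the paper---Lemma~\ref{centd}(c), identifying $\C_G(W_h)$ with the pointwise stabilizer $P$ of $\Delta$, which immediately gives $\rst(\Delta)\subseteq\C_G^2(W_h)$ and the independence from $h$---your handling of the hard inclusion $\C_G(P)\subseteq\rst(\Delta)$ is organized genuinely differently. The paper proceeds in two stages: it first shows $\C_G^2(W_h)\subseteq\St(\Delta)$ (if $\Delta z\neq\Delta$, a nontrivial $x\in\rst(\Delta)$ conjugates into $\rst(\Delta z)$, whose elements lie in $P$ but fail to commute with $z$), and then invokes full depressibility to write $f:=gg_0^{-1}$ with $g_0=\dep(g,\Delta)\in\rst(\Delta)$, deriving a contradiction for $f\neq1$ from a witness $y\in Q_{\Delta''}$ supported in a small o-block displaced by $f$. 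You collapse both stages into a single uniform pointwise argument: any $\alpha\notin\Delta$ moved by $g$ yields an o-block $\Delta_\alpha\in T$ that is simultaneously disjoint from $\Delta$ and displaced by $g$, and a nontrivial $f\in Q_{\Delta_\alpha}\subseteq P$ whose commuting with $g$ would force $\suppo(f)\subseteq\Delta_\alpha\cap\Delta_\alpha g=\emptyset$. This buys two things. First, you never depress $g$ itself, so full depressibility enters only through $Q_{\Delta_\alpha}\neq\emptyset$, making the argument slightly leaner. Second, by taking $\mathcal C\in\fK$ strictly below $\min\{V(\alpha,\alpha g),\kappa(\Delta)\}$, you make explicit the disjointness $\Delta_\alpha\cap\Delta=\emptyset$ that is needed to place the witness in $P$; in the paper's version the block $\Delta'$ (the $V(\alpha,\alpha f)$-class of $\alpha$) could a priori contain $\Delta$, and the choice of $\Delta''\subset\Delta'$ avoiding this is left implicit, so your proof is, if anything, more careful at the one delicate spot. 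Your uses of the chain property (Proposition~\ref{ccc}), of the covering relation between $U(\alpha,\alpha g)$ and $V(\alpha,\alpha g)$ to get that $\mathcal C$ separates $\alpha$ from $\alpha g$, and of the no-minimal-element hypothesis are all sound and placed exactly where they are needed.
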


\begin{proof} 
By Lemma \ref{centd}(c) the subgroups $\rst(\Delta)$ and $\C_G(W_h)$ commute and so $\rst(\Delta)\subseteq\C_G^2(W_h)$. We must prove that $\C_G^2(W_h)\subseteq\rst(\Delta)$.

Suppose that $z\in G$ with $\Delta z\neq \Delta$. So $\Delta z\cap \Delta=\emptyset$.
Since $(G,\Om)$ is fully depressible, $\rst(\Delta)$ contains an element $x\neq1$; then $x^z\in\rst(\Delta z)$ and so $[x,z]\neq1$.  Thus $z\not\in \C_G^2(W_h)$ by Lemma \ref{centd}(c).  It follows that $\C_G^2(W_h)\subseteq \St(\Delta)$.

Hence if 
 $g\in \C^2_G(W_h)$, then the element $g_0:=\dep(g,\Delta)$ of $\rst(\Delta)$ is defined. From above, $\rst(\Delta)\subseteq \C^2_G(W_h)$
and so  $f:=gg_0^{-1}\in \C_G^2(W_h)$. Suppose that $f\neq 1$. Let $\alpha\in \suppo(f)$ and $\Delta'\in T$ be the  o-block of $V(\alpha,\alpha f)$ with $\alpha\in \Delta'$. Let $\Delta''\subset \Delta'$ with $\alpha\in \Delta''$ and $\Delta''f\neq \Delta''$, and let $y\in Q_{\Delta''}$.  Since $\suppo(y)\subseteq \Delta''$ but $\suppo(y^f)\subseteq \Delta'' f$ we have
$[y,f]\neq1$.  However $y\in\C_G(W_h)$ and $f\in\C_G^2(W_h)$, and we have a contradiction. 
Hence $f=1$ and $g=g_0\in\rst(\Delta)$.
\end{proof}

\section{Centralizers: the minimal case}

Again let $(G,\Om)$ be a fully  depressible transitive $\ell$-permutation group with spine $\fK$.
For the case when the spine $\fK$ of $(G,\Omega)$ has a minimal element we need an extra condition.  
We say that a transitive $\ell$-permutation group $(G,\Om)$ is {\em locally abelian} if its spine $\fK$ has a minimal element $K_0$ and the o-primitive $\ell$-permutation group $(G(\Delta),\pi(\Delta))$ is abelian for each o-block
$\Delta$ of $K_0$.  

For the rest of this section we assume that {\em the spine $\fK$ of $G$ has a minimal element and that $(G,\Omega)$ is not locally abelian.}

The results in the previous section can all be recovered under the above hypotheses on $(G,\Om)$.
This follows from the following observation, {in which we write $f\leq g$
for $f\vee g=g$ and $H_+$ for $\{ g\in G\mid g>1\}$:

\begin{remark} \label{analogues}  
 {\rm Let $(H,\Lambda)$ be an o-$2$ transitive $\ell$-permutation group, and
 $\mu_1,\mu_2\in \Lambda$  with
$\mu_1<\mu_2$.
Let $\gamma_1,\gamma_2\in \Lambda$ with $\mu_1<\gamma_1<\gamma_2 <\mu_2$. 
By o-$3$ transitivity, there is $f\in H$ with $\gamma_1 f=\gamma_2$ and $\mu_i f=\mu_i$ for $i=1,2$.
So $\mu_1=\mu_1 f^n<\gamma_1 f^n<\mu_2 f^n=\mu_2$ for all $n\in \Z$.
Let $\Xi$ be the smallest convex subset of $\Lambda$ containing $\{ \gamma_1 f^n\mid n\in \Z\}$.
Then $\Xi f=\Xi$. Let $g:=\dep(f,\Xi)$. Then $g\in \Aut(\Lambda,\leq)_+$ with $\suppo(g)\subseteq(\mu_1,\mu_2)$ and $\gamma_1 g=\gamma_2$. 
} 
\end{remark}

The next three results extend the corresponding results (Lemmata 4.2 and 4.3 and Proposition 4.4) of Section 4.

\begin{lemma} \label{50}
Let $\Delta\in T$ and $h\in Q_\Delta$.  
Let $g\in \rst(\Delta)_+$ and suppose that there is $\alpha\in \suppo(h)$ such that $\suppo(g)\subseteq(\alpha,\alpha h)$ if $\alpha < \alpha h$ or
$\suppo(g)\subseteq(\alpha h,\alpha)$ if $\alpha h< \alpha$. 
\begin{enumerate}   
\item[\rm (a)] \begin{enumerate} \item[\rm(i)]  $[h^{-1},h^g] \neq 1;$ in particular  $X_h\neq 1$.
\item[\rm (ii)] 
If $f\in G$ and $[[h^{-1},h^g],f]=1$, then $\suppo(g^{h^i})f = \suppo(g^{h^i})$ for $i=0,\pm 1$. \end{enumerate}
\item[\rm (b)] If $f\in G$, then either $\beta f= \beta$ for all $\beta\in \suppo(h)$ or 
$[[h^{-1},h^g],f]\neq1$ for some $g\in\rst(\Delta)_+$.
\end{enumerate}
\end{lemma}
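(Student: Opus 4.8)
The plan is to mirror the proof of Lemma \ref{40}, replacing the subsidiary o-block $\Delta'\subset \Delta$ used there by the \emph{support} of the displacing element $g$, which by hypothesis now sits inside a single gap $(\alpha,\alpha h)$ of $h$ (or $(\alpha h,\alpha)$). Throughout I would write $S:=\suppo(g)$ and note that the three sets $Sh^{-1}$, $S$, $Sh$ are pairwise disjoint convex subsets of $\Delta$: disjointness is exactly the reason we insisted $S$ lie strictly between $\alpha$ and $\alpha h$, since applying $h^{\pm1}$ then carries $S$ into adjacent gaps. This disjointness is the structural fact that powered the argument in Lemma \ref{40}, and it survives verbatim here.

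For part (a)(i) I would expand $[h^{-1},h^g]=g^{-h^{-1}}g\,g^{-h}g$ and observe that its restrictions to the three disjoint sets $Sh^{-1}$, $S$, $Sh$ are conjugates of $g^{-1}$, $g^2$, $g^{-1}$ respectively. Since $g\neq 1$ (because $g\in\rst(\Delta)_+$ forces $g>1$), none of these restrictions is trivial, so $[h^{-1},h^g]\neq 1$ and hence $X_h\neq 1$. For (a)(ii), given $f$ with $[[h^{-1},h^g],f]=1$, the conjugate $[h^{-1},h^g]^f$ has nontrivial restrictions to $Sh^{-1}f$, $Sf$, $Sh f$; equating supports forces $\{Sh^{-1},S,Sh\}f=\{Sh^{-1},S,Sh\}$, and then Remark \ref{trivial} (applied to this finite family of pairwise disjoint convex sets) pins down $Sh^{i}f=Sh^{i}$ for $i=0,\pm1$, which is the claim since $S=\suppo(g)$ and $Sh^{i}=\suppo(g^{h^i})$.

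Part (b) proceeds as in Lemma \ref{40}(b) but must now produce the auxiliary element $g$ rather than assume it. Suppose some $\beta\in\suppo(h)$ has $\beta f\neq\beta$. By Proposition \ref{ccc} the convex congruences $V(\beta,\beta h)$ and $V(\beta,\beta f)$ are comparable; in either case I can choose a point displaced by both $h$ and $f$ and exhibit a short gap of $h$ around it inside which to manufacture $g$. Here is where Remark \ref{analogues} does the new work: since we are in the minimal case, the bottom o-primitive quotient is of type (II) (o-$2$ transitive, as $(G,\Omega)$ is not locally abelian), so Remark \ref{analogues} supplies, for any $\alpha\in\suppo(h)$, an element of the relevant $\Aut$-group that is positive with support confined to the gap $(\alpha,\alpha h)$; depressing and lifting it yields $g\in\rst(\Delta)_+$ with $\suppo(g)$ inside that gap and $g$ moving the chosen point. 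Applying (a)(ii) to this $g$ then gives $[[h^{-1},h^g],f]\neq1$.

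The main obstacle is the construction in part (b): in the non-minimal case of Lemma \ref{40} one simply passed to a smaller o-block $\Delta'\subset\Delta$, but when $\fK$ has a minimal element no such smaller o-block is available, so the support of $g$ must be engineered \emph{inside a single gap} of $h$ using the o-$2$ transitivity of the bottom quotient. Getting this support confinement right — ensuring $\suppo(g)\subseteq(\alpha,\alpha h)$ so that $Sh^{-1},S,Sh$ stay disjoint while still guaranteeing $g$ displaces the point witnessing $\beta f\neq\beta$ — is precisely the role of Remark \ref{analogues}, and it is the only genuinely new ingredient beyond transcribing the Section 4 argument.
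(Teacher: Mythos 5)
There is a genuine gap, and it is exactly the point the paper's own proof flags. In your argument for (a)(ii) you apply Remark \ref{trivial} to the family $\{Sh^{-1},S,Sh\}$ with $S=\suppo(g)$, after asserting that ``equating supports forces $\{Sh^{-1},S,Sh\}f=\{Sh^{-1},S,Sh\}$''. This inference was legitimate in Lemma \ref{40} only because the three sets there were o-blocks, i.e.\ classes of a convex $G$-congruence, which every element of $G$ permutes; here $S$ is merely the support of $g$, which need not be convex, and the three sets $Sh^{-1},S,Sh$ are not members of any $G$-invariant family. From $[c,f]=1$ (with $c=[h^{-1},h^g]$) you only get $\suppo(c)f=\suppo(c)$, where $\suppo(c)=Sh^{-1}\cup S\cup Sh$, and an order-automorphism can in principle slide the (possibly many) convex components of this disconnected set across the three pieces; nothing forces $f$ to respect the decomposition. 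The paper is explicit about this: ``Since $\suppo(c)$ may not be convex and the intervals may not be mapped to themselves by $f$, we cannot apply Remark \ref{trivial}. This is where we use that $g>1$.'' The correct argument uses the lattice structure, which your proposal never invokes: $c$ is strictly positive precisely on $S$ (where it equals $g^2$) and strictly negative on $Sh^{\pm1}$, so $[c,f]=1$ gives $(c\vee 1)^f=c\vee 1=g^2$, hence $Sf=S$; then, since $f$ is order-preserving, $\suppo(g^{h^{-1}})<\suppo(g)<\suppo(g^h)$, and $(c^{-1}\vee1)^f=c^{-1}\vee1$, the sets $Sh^{\pm1}$ are also fixed. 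You use the hypothesis $g>1$ only to conclude $g\neq1$, whereas its real role is to make the positive part of $c$ recover $g^2$ and hence $\suppo(g)$.

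Your part (b) has a secondary gap and also diverges from the paper. To derive a contradiction from (a)(ii) it is not enough that $g$ ``displaces the point witnessing $\beta f\neq\beta$'': from $\beta\in\suppo(g)$ and $\beta f\neq\beta$ one cannot conclude $\suppo(g)f\neq\suppo(g)$, since $f$ could map $\suppo(g)$ onto itself while moving $\beta$ within it. One must choose the interval $(\beta_1,\beta_2)$ confining $\suppo(g)$ to be disjoint from $(\beta_1f,\beta_2f)$ as well as from its $h^{\pm1}$-translates (then $\beta f\in\suppo(g)f\setminus\suppo(g)$). The paper arranges exactly this but then argues directly rather than via (a)(ii): from $\beta[h^{-1},h^g]f=\beta g^2f>\beta f$ and commutation it gets $\beta f[h^{-1},h^g]>\beta f$, and then rules out $\beta f$ lying in each of $\suppo(g)$, $\suppo(g^h)$, $\suppo(g^{h^{-1}})$, in the last two cases because the commutator acts there by a conjugate of $g^{-1}$ and would move $\beta f$ \emph{down}. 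Your route through (a)(ii) could be patched once (a)(ii) itself is correctly proved and the $f$-disjointness of the confining interval is added, but as written both the central step of (a)(ii) and the contradiction in (b) are unjustified.
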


\begin{proof}  (a) We assume that $\alpha < \alpha h$, the proof when $\alpha h< \alpha$ being similar.

Write $c:=[h^{-1},h^{g}]=g^{-h^{-1}}gg^{-h}g$.  For $i=0,\pm 1$ we have $\suppo(g^{h^i})\subseteq (\alpha h^i, \alpha h^{i+1})$. These intervals are pairwise disjoint and $\suppo(c)$ lies in their union. Since $\suppo(c)$ may not be convex and the intervals may not be mapped to themselves by $f$, we cannot apply Remark \ref{trivial}. This is where we use that $g>1$. The restriction of $c$ to $(\alpha,\alpha h)$ is $g^2>1$ and  
$c$ is strictly positive only on $\suppo(g)$.
Moreover, if $[c,f]=1$, then $(c\vee 1)^f=c^f\vee 1^f=c\vee 1$, so $f$ must conjugate $g^2=c\vee 1\in G$ to itself,
and $c^{-1} \vee 1$ to itself. 
Since $f$ is order-preserving and $\suppo(g^{h^{-1}}) < \suppo(g) < \suppo(g^h)$ and $\suppo(g^2)=\suppo(g)$,
we must have  $\suppo(g^{h^i})f= \suppo(g^{h^i})$ for $i=\pm 1$.

(b) Suppose that $\beta f\neq \beta$ for some  $\beta\in \suppo(h)$ and that $[h^{-1},h^g]$ and $f$ commute for all $g\in\rst(\Delta)_+$.
Let $K_0$ be the minimal element of $\fK$ and 
$\Lambda$ be the $K_0$ o-block with $\beta\in \Lambda\subseteq \Delta$.  Then 
$(G(\Lambda),\pi(\Lambda))$ is o-primitive and o-$2$ transitive
since $(G,\Om)$ is not locally abelian.  
Now $\beta f$ and $\beta h$ are distinct from $\beta$ and so there is an 
interval $(\beta_1,\beta_2)$ containing $\beta$ and disjoint from $(\beta_1 f,\beta_2 f)$  
and $(\beta_1 h,\beta_2 h)\cup (\beta_1 h^{-1},\beta_2 h^{-1})$.
By Remark \ref{analogues} with $\mu_1=\beta_1$ and $\mu_2=\beta_2$,  there is $g\in G_+$ with $\beta\in \suppo(g) \subseteq 
(\beta_1,\beta_2)$. Thus the sets $\suppo(g^{h^i})$ for $i\in\{0,\pm1\}$ are pairwise disjoint and since  $[h^{-1},h^{g}]=g^{-h^{-1}}gg^{-h}g$ 
we obtain that $\beta [h^{-1},h^g]f=\beta g^2f>\beta f$. Since $[h^{-1},h^g]$, $f$ commute, it follows that  
$$\beta f[h^{-1},h^g]>\beta f.$$
Thus $\beta f\in\suppo ([h^{-1},h^g])\subseteq \suppo(g)\cup\suppo(g^h)\cup\suppo(g^{h^{-1}})$. 
Since $\suppo(g)\subseteq (\beta_1,\beta_2)$ and $\beta f\in(\beta_1f,\beta_2f)$ we have $\beta f\notin\suppo(g)$, whereas if
$\beta f\in \suppo(g^h)$ then $\beta f[h^{-1},h^g]=\beta fg^{-h}<\beta f$, 
 and if $\beta f\in \suppo(g^{h^{-1}})$, then $\beta f[h^{-1},h^g]=\beta fg^{-(h^{-1})}<\beta f$.  A contradiction ensues and the lemma is proved.
\end{proof}

\begin{lemma} \label{5centd}
Let $\Delta\in T$ and $h\in Q_{\Delta}$. 
\begin{enumerate}
\item[\rm(a)] $\C_G(X_h)$ contains the pointwise stabilizer of $\Delta$ and is contained in the pointwise stabilizer of $\suppo(h)$.
\item[\rm(b)] $$ W_{h} = \bigcup\{ X_{h^g}\mid g\in \St(\Delta)\}.$$ 
\item[\rm(c)]
$\C_G(W_h)$ is the pointwise stabilizer of $\Delta$.
\end{enumerate}\end{lemma}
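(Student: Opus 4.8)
The plan is to follow the proof of Lemma~\ref{centd} line by line, substituting Lemma~\ref{50} for Lemma~\ref{40} wherever the non-minimal argument exploited the existence of o-blocks of $T$ strictly inside a given one. Parts~(a) and~(c) then transcribe almost verbatim, and the substantive work is confined to one case of part~(b). For~(a), the pointwise stabilizer of $\Delta$ centralizes $X_h$ because $X_h\subseteq\rst(\Delta)$ moves only points of $\Delta$. Conversely, for $f\in\C_G(X_h)$ I would first record that $X_h\neq1$, the hypothesis of Lemma~\ref{50}(a)(i) being met by invoking Remark~\ref{analogues} to produce a positive $g\in\rst(\Delta)_+$ supported in $(\alpha,\alpha h)$ for some $\alpha\in\suppo(h)$. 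Then $X_h=X_h^f\subseteq\rst(\Delta)\cap\rst(\Delta f)$ forces $\Delta f=\Delta$, so $f\in\St(\Delta)$; and since $f$ commutes with every $[h^{-1},h^g]\in X_h$, the contrapositive of Lemma~\ref{50}(b) gives $\beta f=\beta$ for all $\beta\in\suppo(h)$.

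For~(b) the initial reduction is unchanged: if $\Delta g\neq\Delta$ then $X_h$ and $X_{h^g}$ have disjoint support, so $W_h=\bigcup\{X_{h^g}\mid g\in\St(\Delta),\ [X_h,X_{h^g}]\neq1\}$, and it remains to prove $X_k\subseteq W_h$ for $k:=h^g$ with $g\in\St(\Delta)$. I would split according to the action induced on $\pi(\Delta)$. If every $\Gamma\in\pi(\Delta)$ satisfies $\Gamma h=\Gamma$ or $\Gamma k=\Gamma$, then $(G(\Delta),\pi(\Delta))$ cannot be of type~(I) in Proposition~\ref{dichot}, for a nonidentity element of a regular representation moves every class while $h_\Delta,k_\Delta\neq1$ (as $h,k\in Q_\Delta$); hence it is o-$2$ transitive and Lemma~\ref{primd} supplies elements of $X_h$ and $X_k$ with non-commuting images, exactly as in Lemma~\ref{centd}.

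The remaining case, that some $\Gamma\in\pi(\Delta)$ has $\Gamma h\neq\Gamma$ and $\Gamma k\neq\Gamma$, is where the minimal hypothesis bites. If $\kappa(\Delta)$ lies strictly above $K_0$ there are still o-blocks of $T$ inside $\Gamma$, and the non-minimal construction of auxiliary elements $x\in Q_{\Delta''}$, $y\in Q_{\Delta'}$ applies without change. If $\kappa(\Delta)=K_0$ no such o-blocks exist, so I would instead use that $(G(\Delta),\pi(\Delta))$ is o-$2$ transitive (by the not-locally-abelian hypothesis) together with Remark~\ref{analogues} to manufacture positive elements supported in prescribed short intervals about a chosen $\beta\in\Gamma$, lift them through full depressibility, and feed the resulting commutators $[h^{-1},h^y]\in X_h$ and $[k^{-1},k^x]\in X_k$ into the same disjoint-support computation as before to conclude $[X_h,X_k]\neq1$.

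Part~(c) is then unchanged: the pointwise stabilizer of $\Delta$ lies in $\C_G(W_h)$ because $W_h\subseteq\rst(\Delta)$, and conversely, given $\delta\in\Delta$ and $\alpha\in\suppo(h)$, choosing $g\in\rst(\Delta)$ with $\alpha g=\delta$ puts $\delta\in\suppo(h^g)$, so by~(a) the subgroup $\C_G(W_h)\subseteq\C_G(X_{h^g})$ fixes $\delta$. The main obstacle is the second case of~(b): in the non-minimal setting the nesting and disjointness of the supports were inherited from a descending chain of o-blocks, whereas here they must be engineered inside the o-$2$ transitive quotient by means of Remark~\ref{analogues}, and one must check that the positivity demanded by Lemma~\ref{50} is compatible with the interval-disjointness needed to force the two commutators apart.
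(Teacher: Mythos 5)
Your overall architecture matches the paper's: parts (a) and (c) are indeed verbatim transcriptions of Lemma \ref{centd} with Lemma \ref{50} in place of Lemma \ref{40} (your use of Remark \ref{analogues} to secure $X_h\neq1$ and of Lemma \ref{50}(b) to pin down $\C_G(X_h)$ is exactly what is needed), the reduction of (b) to $g\in\St(\Delta)$ is the same, and the case where no $\pi(\Delta)$-class is moved by both $h$ and $k$ is dispatched, as in the paper, by excluding type (I) and invoking Lemma \ref{primd}. The genuine gap is in your trichotomy for the remaining case of (b). You claim that whenever $\kappa(\Delta)$ lies strictly above $K_0$ ``the non-minimal construction of auxiliary elements $x\in Q_{\Delta''}$, $y\in Q_{\Delta'}$ applies without change.'' This fails when $\kappa(\Delta)$ \emph{covers} $K_0$: any spine element strictly below $\kappa(\Delta)$ must equal $K_0$, so every $T$-block strictly inside a $\pi(\Delta)$-class $\Gamma$ is a minimal o-block, and the nested pair $\Delta''\subset\Delta'\subset\Gamma$ required by the non-minimal argument does not exist --- $y\in Q_{\Delta'}$ may be available, but $x\in Q_{\Delta''}$ is not. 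The paper isolates precisely this situation (``$\Delta$ covers a minimal element of $T$'') as a separate case handled by adapting the minimal argument; your Remark \ref{analogues} machinery would repair it, but as written you reserve that machinery for $\kappa(\Delta)=K_0$ only, so one of your three cases is assigned an argument that does not apply to it.

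Secondly, in the case $\kappa(\Delta)=K_0$ you defer the crux with the phrase ``the same disjoint-support computation as before,'' and your closing paragraph concedes this is the main obstacle without resolving it. The computation is not literally the same: the paper must first choose $\lambda_1<\delta<\lambda_2<\min\{\lambda_1h,\lambda_1k\}$ so that $(\lambda_1,\lambda_2)$ is carried off itself by $h^{\pm1}$ and by $k^{\pm1}$, take $y\in\rst(\Delta)_+$ supported in $(\lambda_1,\lambda_2)$ with $\delta\in\suppo(y)$, and then take $x$ supported in $(\delta,\delta y)$ --- so $\suppo(x)$ now sits \emph{inside} the interval moved by $y$, unlike the non-minimal case where $\suppo(x)\subseteq\Delta''$ was disjoint from $\Delta''y^2$ and one simply compared $\delta''y^2x^2$ with $\delta''x^2y^2$. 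Instead one shows $a^b\neq a$ (with $a=[k^{-1},k^x]$, $b=[h^{-1},h^y]$) by verifying that $b^{-1}$ acts as $y^{-2}$ on $(\lambda_1,\lambda_2)$, that $a$ fixes $\beta y^{-2}$ while $\beta a\neq\beta$, whence $a^b$ fixes $\beta$. So your sketch names all the correct ingredients, but the two places where the minimal hypothesis actually bites --- the covering case and the rearranged commutator computation --- are respectively misclassified and left unverified.
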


\begin{proof} The proofs of (a) and (c) are identical to those of Lemma \ref{centd}. The same is true for (b) in the case when there are $\Delta'',\Delta'\in T$ with $\Delta''\subset \Delta'\subset \Delta$.
It remains to consider the cases when $\Delta$ is minimal in $T$ or covers a minimal element of $T$.
First assume that $\Delta$ is minimal in $T$. 

 Let $g\in G$.  
If $\Delta g \neq \Delta$, then  $\rst(\Delta)\cap \rst(\Delta g)=1$ and the elements of $X_h$ and $X_{h^g}$ have disjoint support. Thus $[X_h,X_{h^g}]=1$. Hence
$$W_{h} = \bigcup\{ X_{h^g}\mid g\in \St(\Delta), [X_h,X_{h^g}]\neq 1\}.$$ 

Now let $g\in \St(\Delta)$ and $k=h^g$. So $k\in Q_{\Delta}$. 
 
First suppose that there is some
$\delta\in \Delta$ with $\delta \in \suppo(h) \cap \suppo(k)$.
Assume that $\delta h>\delta$ and $\delta k>\delta$, the other three cases being similar. 
Choose $\lambda_1<\delta< \min\{ \lambda_1 h,\lambda_1k\}$ and $\lambda_2\in (\delta,\min\{ \lambda_1 h,\lambda_1k\})$.
So $(\lambda_1,\lambda_2)h=(\lambda_1h,\lambda_2h)\subseteq (\lambda_2,\lambda_2h)$ since $\lambda_2<\lambda_1h$; and $(\lambda_1,\lambda_2)h^{-1}=(\lambda_1h^{-1},\lambda_2h^{-1})\subseteq (\lambda_1h^{-1},\lambda_1)$ since $\lambda_2h^{-1}<\lambda_1hh^{-1}=\lambda_1$.
Similarly $(\lambda_1,\lambda_2)k\subseteq (\lambda_2,\lambda_2k)$ and $(\lambda_1,\lambda_2)k^{-1}\subseteq (\lambda_1k^{-1},\lambda_1)$.

Since $(G,\Om)$ is fully  depressible,  by Remark \ref{analogues} applied for $\mu_1=\lambda_1$ and $\mu_2=\lambda_2$ there is $y\in \rst(\Delta)_+$ with $\delta\in \suppo(y)$ and $\suppo(y)\subseteq (\lambda_1,\lambda_2)$.  Define $b:=[h^{-1},h^y]=y^{{-h^{-1}}}yy^{-h}y$.
By the previous paragraph, $\suppo(y^{h^i})\cap (\lambda_1,\lambda_2)=\emptyset$ for $i\in\{-1,1\}$ and so $\lambda b^{-1}=\lambda y^{-2}$ for all $\lambda\in (\lambda_1,\lambda_2)$.  

By Remark \ref{analogues}  (this time applied for
$\mu_1=\delta$ and $\mu_2=\delta y$), we can find  $x\in G_+$  and $\beta\in \suppo(y)$ with $\beta\in \suppo(x)\subseteq (\delta,\delta y)$.  Define
$a:=[k^{-1},k^{x}]=x^{-k^{-1}}xx^{-k}x$.  From above, the sets $\suppo(x^{k^{i}})\subseteq (\delta,\delta y)k^{i}\subseteq (\lambda_1,\lambda_2)k^i$ for $i\in\{0,\pm1\}$ are disjoint
and so $(\lambda_1,\lambda_2)\cap\suppo(a)=(\lambda_1,\lambda_2)\cap\suppo(x)$. Moreover
$\lambda a=\lambda x^2\in (\lambda_1,\lambda_2)$ for all $\lambda\in (\lambda_1,\lambda_2)$ and
$\beta a=\beta x^2\neq \beta$.

\quad Since $\beta\in (\delta,\delta y)$ we have $\beta y^{-2}\in (\delta y^{-2},\delta y^{-1})\subseteq (\lambda_1,\lambda_2)$. 
But $(\delta y^{-2},\delta y^{-1})\cap \suppo(a)=(\delta y^{-2},\delta y^{-1})\cap \suppo(x)=\emptyset$; therefore $a$ fixes $\beta y^{-2}=\beta b^{-1}$ and $a^b$ fixes $\beta$.
Thus $\beta a^b\neq \beta a$ and so $a^b\neq a$.  However, $a\in X_{k}$ and $b\in X_h$.   Hence $[X_h,X_{k}]\neq 1$ and $X_{k}\subseteq W_h$.

Now suppose that each element of $\Delta$ is fixed by $h$ or $k$. Since the minimal o-primitive component is of type (II),   Lemma \ref{primd} applies and provides elements of $X_h$ and $X_k$ whose images in the minimal o-primitive component fail to commute.
This completes the proof of (b) in the case when $\Delta$ is minimal in $T$.  An easy adaptation gives the proof in the case when $\Delta$ covers a minimal element of $T$. \end{proof}

\begin{prop}\label{2centm}
For every $\Delta\in T$ and $h\in Q_{\Delta}$, 
$\C_{G}^2(W_h)=\rst(\Delta).$    Thus
if $\beta\in \Delta$ and $h'\in \rst(\Delta)$  with $V(\beta,\beta h')=\kappa(\Delta)$, then $$\C_G^2(W_h)=\C_G^2(W_{h'}).$$ In particular, 
$$\C_G^2(W_h)=\C_G^2(W_{h'})\quad \hbox{for all}\;\;  h,h'\in Q_{\Delta}.$$
\end{prop}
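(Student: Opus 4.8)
The plan is to follow the proof of Proposition~\ref{2centd} essentially verbatim, replacing Lemma~\ref{centd} by Lemma~\ref{5centd}, and to supply a new construction only at the single point where the earlier argument fails.

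First I would recover the two inclusions. Since $\C_G(W_h)$ is the pointwise stabilizer of $\Delta$ by Lemma~\ref{5centd}(c), it commutes with $\rst(\Delta)$, so $\rst(\Delta)\subseteq\C_G^2(W_h)$. For the reverse, suppose $z\in G$ with $\Delta z\neq\Delta$; as $(G,\Om)$ is fully depressible there is a nontrivial $w\in\rst(\Delta z)$, which fixes $\Delta$ pointwise and hence lies in $\C_G(W_h)$, while $w^z\in\rst(\Delta z^2)$ has support disjoint from that of $w$, so $[w,z]\neq1$ and $z\notin\C_G^2(W_h)$. Thus $\C_G^2(W_h)\subseteq\St(\Delta)$, and for $g\in\C_G^2(W_h)$ the element $g_0:=\dep(g,\Delta)\in\rst(\Delta)\subseteq\C_G^2(W_h)$ is defined, whence $f:=gg_0^{-1}\in\C_G^2(W_h)$ fixes $\Delta$ pointwise. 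It remains to prove that $f=1$, since then $g=g_0\in\rst(\Delta)$.

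Assume $f\neq1$ and pick $\alpha\in\suppo(f)$, so $\alpha\notin\Delta$ and $\alpha f\neq\alpha$; I aim to produce $y\in\C_G(W_h)$ with $[y,f]\neq1$, contradicting $f\in\C_G^2(W_h)$. Let $K_0$ be the minimal element of $\fK$ and $\Lambda$ the o-block of $K_0$ containing $\alpha$. Since $\Delta$ is a union of o-blocks of $K_0$ and $\alpha\notin\Delta$, we have $\Lambda\cap\Delta=\emptyset$, so every element supported in $\Lambda$ fixes $\Delta$ pointwise and thus lies in $\C_G(W_h)$. As $V(\alpha,\alpha f)\in\fK$ and $K_0$ is minimal, $V(\alpha,\alpha f)\supseteq K_0$, leaving two cases. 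If $\Lambda f\neq\Lambda$, any nontrivial $y\in\rst(\Lambda)$ satisfies $\suppo(y^f)\subseteq\Lambda f$, disjoint from $\suppo(y)\subseteq\Lambda$, so $[y,f]\neq1$, as wanted. If $\Lambda f=\Lambda$, then $V(\alpha,\alpha f)=K_0$, so $\alpha$ and $\alpha f$ lie in distinct o-blocks of $\pi(K_0)$ and the induced automorphism $f_\Lambda$ of $\pi(\Lambda)$ moves the class of $\alpha$ off itself.

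The main obstacle is this last case: in contrast to Proposition~\ref{2centd} there need be no $\Delta''\in T$ properly inside $\Lambda$, so I cannot simply take $y\in Q_{\Delta''}$. Instead I would use that, as in the proof of Lemma~\ref{5centd}(b), the component $(G(\Lambda),\pi(\Lambda))$ is o-$2$ transitive, hence densely ordered. Since $f_\Lambda$ moves the class of $\alpha$, I can choose an interval $I=(\mu_1,\mu_2)$ of $\pi(\Lambda)$ containing that class with $I\cap If_\Lambda=\emptyset$, and then apply Remark~\ref{analogues} to obtain a nontrivial $y\in\rst(\Lambda)$ whose support lies over $I$. Then $\suppo(y)$ and $\suppo(y^f)=\suppo(y)f$ lie over the disjoint sets $I$ and $If_\Lambda$, so they are disjoint and $[y,f]\neq1$; as $y$ is supported in $\Lambda$ we have $y\in\C_G(W_h)$, the desired contradiction. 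Hence $f=1$ and $\C_G^2(W_h)=\rst(\Delta)$. Finally, the displayed equalities are immediate: any $h'$ as in the statement lies in $Q_\Delta$, so $\C_G^2(W_{h'})=\rst(\Delta)=\C_G^2(W_h)$, and likewise for all $h,h'\in Q_\Delta$.
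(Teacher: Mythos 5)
Your proof is correct and follows the paper's argument in all essentials: the same two inclusions (via Lemma \ref{5centd}(c) and full depressibility), the same reduction to showing $f:=gg_0^{-1}=1$ where $\suppo(f)\cap\Delta=\emptyset$, and the same contradiction obtained from a nontrivial $y$ supported in a minimal o-block $\Lambda$ disjoint from $\Delta$ (so $y\in\C_G(W_h)$ by Lemma \ref{5centd}(c)) with $[y,f]\neq1$, constructed via Remark \ref{analogues} and the o-$2$ transitivity of the minimal component. The only divergence is cosmetic and, if anything, slightly more careful than the paper: where the paper applies Remark \ref{analogues} once with endpoints $\alpha,\alpha f$ to get $\suppo(y)\subseteq(\alpha,\alpha f)$ and $\suppo(y^f)\subseteq(\alpha f,\alpha f^2)$, you split into the trivial case $\Lambda f\neq\Lambda$ and the case $\Lambda f=\Lambda$, where your interval $I$ with $I\cap If_\Lambda=\emptyset$ (which does exist, by density: pick $\mu_2\in(\bar\alpha,\bar\alpha f_\Lambda)$ and then $\mu_1\in(\mu_2 f_\Lambda^{-1},\bar\alpha)$) handles cleanly the possibility that $\alpha f\notin\Lambda$.
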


\begin{proof} 
By Lemma \ref{5centd}(c) we have $\rst(\Delta)\subseteq \C^2_G(W_h)$, and the argument at the corresponding point in the proof of Lemma \ref{2centd} shows that
$\C_G^2(W_h)\subseteq\St (\Delta)$.

Let $g\in \C^2_G(W_h)$ and $g_0:=\dep(g,\Delta)\in \rst(\Delta)\subseteq C_G^2(W_h)$.
Thus $f:=gg_0^{-1}\in \C_G^2(W_h)$ and $\suppo(f)\cap \Delta=\emptyset$. 
If $f\neq 1$, let $\alpha\in \suppo(f)$ and $\Delta'$ be the minimal o-block in $T$ with $\alpha\in \Delta'$. We assume that $\alpha f>\alpha$, the other case being similar. 
Since $(G(\Delta'),\pi(\Delta'))$ is o-primitive (and so o-$2$ transitive) and since $(G,\Omega)$ is fully depressible, Remark \ref{analogues} (with $\beta_1=\alpha, \beta_2= \alpha f)$ yields a non-trivial element $y\in \rst(\Delta')$ with $\suppo(y)\subseteq (\alpha,\alpha f)\subseteq \suppo(f)$.  Since $\suppo(f)\cap \Delta=\emptyset$, we have $\delta y=\delta$ for all $\delta\in \Delta$ and thus $y\in \C_G(W_h)$ by Lemma \ref{5centd}(c).
But $y^f\neq y$ since $\suppo(y^f)\subseteq(\alpha f,\alpha f^2)$.
This contradicts that $y\in \C_G(W_h)$ and $f\in \C_G^2(W_h)$. 
Hence $f=1$ and every element of $\C_{G}^2(W_h)$ lies in $\rst(\Delta)$.
\end{proof}

\section{Proof of Proposition \ref{propoprim}}

\begin{proof}
Let $(G,\Om)$ be a transitive fully  depressible $\ell$-permutation group. If $(G,\Om)$ is o-primitive 
then $T=\{\Omega\}$;
if $(G,\Om)$ is also abelian, then $X_g=\{ 1\}$ for all $g\in G\setminus \{1 \}$ and so $\C^2_G(W_g)=\C_G(G)=G$, whereas if $(G,\Om)$ is non-abelian and $g\in G\setminus \{ 1\}$,
then $g\in Q_\Omega$ and $\C^2_G(W_g)=G$ by Proposition \ref{2centm}.

Now suppose that  $(G,\Om)$ is not o-primitive and choose $\Delta\in T$ with $\Delta\neq \Om$. By full depressibility there is an element $g\in Q_\Delta$, and by transitivity $\Delta f\cap \Delta=\emptyset$ for some $f\in G$. Since $g^f\in Q_{\Delta f}$ we have $[g,f]\neq1$, and $G$ is not abelian.
If $(G,\Om)$ is not locally abelian, then $\C^2_G(W_g)=\rst(\Delta)$ is disjoint from $\rst(\Delta f)=\C^2_G(W_{f^{-1}gf})$ by
Propositions \ref{2centd} and \ref{2centm};
so $\C^2_G(W_g)\neq G$.
If instead  $(G,\Om)$ is locally abelian, let $\Delta\in T$ be minimal and $g\in Q_\Delta$. Then $\suppo(g)=\Delta$ and for any $f\in G$ either $\Delta  f\cap \Delta=\emptyset$ or $\suppo(g^f)=\Delta$. In each case, $[g^{-1},g^f]=1$. Thus $W_g=\emptyset$ and $\C^2_G(W_g)=\C_G(G)\neq G$, and the proposition is established. 
\end{proof}

\medskip

The proof of Proposition \ref{propoprim} completes the proof of Theorems \ref{reals} and \ref{GHQ2}.

\bigskip
  
{\noindent {\bf Acknowledgment.} {\rm This research was begun when the second author was the Leibniz Professor at the University of Leipzig. The authors are most grateful to the Research Academy, Leipzig and the Leibniz Program of the University of Leipzig for funding a visit by the first author that made this research possible.}

\bigskip

\noindent A. M. W. Glass,

\noindent QUEENS' COLLEGE, CAMBRIDGE CB3 9ET,
U.K.\medskip

E-mail: amwg@dpmms.cam.ac.uk \bigskip

\noindent John S. Wilson,

\noindent MATHEMATICAL INSTITUTE, ANDREW WILES BUILDING, 

\noindent WOODSTOCK ROAD, OXFORD OX2 6GG, U.K.

E-mail: John.Wilson@maths.ox.ac.uk

\end{document}